\numberwithin{equation}{section}				
\def\csname ver@etex.sty\endcsname{3000/12/31}
\let\originalleft\left
\let\originalright\right
\renewcommand{\left}{\mathopen{}\mathclose\bgroup\originalleft}
\renewcommand{\right}{\aftergroup\egroup\originalright}
\def\({\mathopen{}\left(}
\def\){\right)\mathclose{}}
\renewcommand*{\eqref}[1]{\hyperref[{#1}]{\textup{\tagform@{\ref*{#1}}}}}
\newcommand*{\eqdef}{\mathrel{\vcenter{\baselineskip0.5ex \lineskiplimit0pt\hbox{.}\hbox{.}}}=}
\newtheorem*{acknowledgment}{Acknowledgment}
\newtheorem{theorem}{Theorem}[section]
\newtheorem{lemma}[theorem]{Lemma}
\newtheorem{corollary}[theorem]{Corollary}
\newtheorem{remark}[theorem]{Remark}
\newtheorem{definition}[theorem]{Definition}
\newtheorem{example}[theorem]{Example}
\crefname{theorem}{Theorem}{Theorems}						
\crefname{main}{Main Theorem}{Main Theorems}				
\crefname{lemma}{Lemma}{Lemmas}								
\crefname{corollary}{Corollary}{Corollaries}				
\crefname{ineq}{inequality}{inequalities}					
\crefname{cond}{condition}{conditions}						
\crefname{table}{Table}{Tables}								
\crefname{hypothesis}{Hypothesis}{Hypotheses}				
\crefname{remark}{Remark}{Remarks}							
\crefname{definition}{Definition}{Definitions}				
\def\id{\mathbbm{1}}
\def\cx{\mathbbm{C}}
\def\rl{\mathbbm{R}}
\def\N{\mathbbm{N}}
\def\P{\mathbbm{P}}
\def\Z{\mathbbm{Z}}
\def\cA{\mathcal{A}}
\def\cB{\mathcal{B}}
\def\cD{\mathcal{D}}
\def\cE{\mathcal{E}}
\def\cF{\mathcal{F}}
\def\cG{\mathcal{G}}
\def\cH{\mathcal{H}}
\def\cK{\mathcal{K}}
\def\cL{\mathcal{L}}
\def\cM{\mathcal{M}}
\def\cN{\mathcal{N}}
\def\cO{\mathcal{O}}
\def\cV{\mathcal{V}}
\def\cZ{\mathcal{Z}}
\def\Ar{\mathrm{Area}}
\def\CL{\mathrm{CL}}
\def\coker{\mathrm{coker}}
\def\rd{\mathrm{d}}
\def\dA{\: \rd A}
\def\dist{\mathrm{dist}}
\def\dom{\mathrm{dom}}
\def\End{\mathrm{End}}
\def\Hom{\mathrm{Hom}}
\def\index{\mathrm{index}}
\def\Re{\mathrm{Re}}
\def\sign{\mathrm{sign}}
\def\SO{\mathrm{SO}}
\def\Spec{\mathrm{Spec}}
\def\Sym{\mathrm{Sym}}
\def\supp{\mathrm{supp}}
\def\rU{\mathrm{U}}
\def\vol{\: \mathrm{vol}}
\def\cl{\mathfrak{cl}}
\def\del{\partial}
\def\delbar{\overline{\partial}}
\def\D{\slashed{D}}						
\def\S{\slashed{S}}						
\def\E{\slashed{E}}
\def\p{\mathfrak{p}}
\def\rd{\operatorname{d\!}{}}
\def\s{\mathfrak{s}}
\def\su{\mathfrak{su}}
\title{Conjugate linear perturbations of Dirac operators and Majorana fermions}
\date{\today}
\keywords{Majorana fermions, Jackiw--Rossi theory, concentrating pairs}
\subjclass[2020]{35J46, 47A53, 53C27, 58J05}
\author{\'Akos Nagy}
\address[\'Akos Nagy]{University of California, Santa Barbara}
\urladdr{\href{https://akosnagy.com}{akosnagy.com}}
\email{\href{mailto:contact@akosnagy.com}{contact@akosnagy.com}}
\begin{document}

\begin{abstract}
	We study a canonical class of perturbations of Dirac operators that are defined in any dimension and on any Hermitian Clifford module bundle. These operators generalize the 2-dimensional Jackiw--Rossi operator, which describes electronic excitations on topological superconductors. We also describe the low energy spectrum of these operators on complete surfaces, under mild hypotheses.
\end{abstract}

\maketitle

\tableofcontents

\section*{Introduction}

In \cite{jackiw_zero_1981}, Jackiw and Rossi introduced a Dirac-type equation in two spatial dimensions which corresponds to a Lagrangian that couples Dirac fermions to the superconducting order parameter of an $s$-wave superconductor. Ground states of this theory are interpreted as Majorana fermions pinned to vortices \cite{CJNPS10}. Furthermore, this theory has potential applications in quantum computing; cf. \cites{ivanov_non-abelian_2001,FK08,Manna8775}.

\smallskip

In this paper, we reformulate the classical Jackiw--Rossi theory in terms of spin geometry and generalize the Jackiw--Rossi (Hamiltonian) operator to more general fields and higher dimensions, and study the spectral properties of this theory. These \emph{generalized Jackiw--Rossi operators} have the form
\begin{equation}
	H = \D + \cA,
\end{equation}
where $\D$ is a Dirac-type operator on and $\cA$ is a conjugate linear bundle map. Since $H$ is not complex linear, its eigenspaces are not complex (but only real) subspaces of the Hilbert space. Moreover, eigenspinors of $H$ can be viewed as Majorana fermions; cf. \cite{CJNPS10}. Furthermore, in certain cases the above operators have been studied in the context of pseudo-holomorphic curves; cf. \cites{taubes_rm_1996,taubes_counting_2000,R04,lee_spin_2013,gerig_generic_2017,doan_castelnuovo_2021,doan_equivariant_2023}.

\smallskip

After introducing the general theory, we study the Jackiw--Rossi equation
\begin{equation}
	H \Psi = m \Psi,
\end{equation}
on complete surfaces, with $|m|$ small. The key analytic observation in studying the Jackiw--Rossi equation is that the planar Jackiw--Rossi operator comes from a \emph{concentrating pair}, in the sense of Maridakis; cf. \cite{maridakis_spinor_2015}*{Definition~2.1}. In other words, the operator
\begin{equation}
	\cB \eqdef \D^* \circ \cA + \cA^* \circ \D,
\end{equation}
is a bundle map, as opposed to a first order differential operator. This allows us to prove strong contraction results for eigenspinors; cf. \cites{prokhorenkov_perturbations_2006,maridakis_spinor_2015}.

\smallskip

In the final section of the paper, we construct a canonical projective space bundle over symmetric powers of Riemann surfaces and we provide solutions to the (generalized) Jackiw--Rossi equation in higher dimensions.

\medskip

\begin{acknowledgment}
	I am thankful for the help of Tom Parker, who made me interested in the project during my grad school years.

	I have learned a lot about the physics related to this article from D\'aniel Varjas, and about the math from Manos Maridakis.

	I am also grateful for the organizers of the ``\emph{Mathematics of topological phases of matter}'' program at the Simons Center for Geometry and Physics, especially Dan Freed, for hosting me while I was working on parts of this project.

	I thank Xianzhe Dai for pointing out a mistake in an earlier version of the paper.

	Finally, I thank the Referees for their thorough feedback.
\end{acknowledgment}

\medskip

\subsection*{Organization of the paper:} In \Cref{sec:JR_on_manifolds}, we introduce the generalized Jackiw--Rossi Hamiltonians. In \Cref{sec:JR_on_surfaces}, we construct solutions to the Jackiw--Rossi equations on closed surfaces and curved planes. \Cref{sec:applications} is devoted to four applications of \Cref{theorem:general}. For completeness, in \Cref{sec:pairings}, we summarize the necessary notation and background from the representation theory of Clifford algebras and modules. In \Cref{app:BdG}, we also construct generalizations of the Bogoliubov--de Gennes equation.

\bigskip

\section{Generalized Jackiw--Rossi theory on pseudo-Riemannian manifolds}
\label{sec:JR_on_manifolds}

In this section, we generalize the above construction to bundles of Clifford modules. Let $\( X, g \)$ be a $d$-dimensional, smooth, oriented, pseudo-Riemannian manifold of signature $(t, s)$. As before, let $r = t - s$. Let $\CL \( T^* X, g \)$ be the bundle of (real) Clifford algebras given by
\begin{equation}
	\forall x \in X : \quad \CL \( T^* X, g \)_x \eqdef \CL \( T_x^* X, g_x \).
\end{equation}
Finally, let $\E \rightarrow X$ be a Hermitian vector bundle with a Clifford structure. In other words, there is a Clifford multiplication, $\cl : \CL \( T^* X, g \) \rightarrow \End \( \E \)$, that is a unital homomorphism of algebras, such that for all $v \in T^* X$, $\cl (v) \in \su \( \E \)$. We can think of $\cl$ as a map from $\bigwedge^* (X) \otimes \E$ to $\E$. The associated Hermitian vector bundle, $\cF$, is defined via
\begin{equation}
	\E \otimes \E \cong \left\{
	\begin{array}{ll}
		\bigwedge_\cx^{\mathrm{even}} (X) \otimes \cF,	& \mbox{if $d$ is odd,} \\
		\bigwedge_\cx^* (X) \otimes \cF,				& \mbox{if $d$ is even}.
	\end{array} \right. \label{eq:def_cF}
\end{equation}
We remark that the isomorphism above is canonical; cf. \cite{N07}*{Proposition 11.1.27.}. Thus, in particular, we again get a pairing $\cB_\E : \E \otimes \E \rightarrow \cF$. Furthermore, $\cF^*$ decomposes as $\cF^* \cong \cF^+ \oplus \cF^-$, to symmetric and anti-symmetric parts, as in \Cref{sec:CL_modules}. Note that $\cF$ need not be a tensor-square of another vector bundle.

We call a connection on $\E$ \emph{compatible}, if it is unitary and the Clifford multiplication is parallel. Let $\( \nabla, \Phi \)$ be a pair of a compatible connection $\nabla$ and a smooth section of $\cF^*$, $\Phi$. We define two operators on sections of $\E$ as follows:
\begin{enumerate}

	\item Using $\nabla$, we define the twisted Dirac operator as
	\begin{equation}
		\D_\nabla : L_1^2 \( \E \) \rightarrow L^2 \( \E \); \: \Psi \mapsto \D_\nabla \Psi = \cl \( \nabla \Psi \).
	\end{equation}

	\item Using $\Phi$, we define the perturbation term $\cA_\Phi$ as
	\begin{equation}
	 	h^\E \( \cA_\Phi \Psi_1, \Psi_2 \) \eqdef \Phi \( \cB_\E \( \Psi_1, \Psi_2 \) \). \label{eq:A_Phi}
	\end{equation}
	If $\Phi$ is a section of $\cF^\pm$, then let $\s_\Phi \eqdef \pm \s_r$ and, thus, $\cA_\Phi^* = \s_\Phi \cA_\Phi$.

\end{enumerate}

\begin{example}
	Let $X$ be spin and $\E = \S$ a spinor bundle corresponding to a spin structure. Then $\cF^*$ is the trivial line bundle. When equipped with the product connection, $\D = \D_\nabla$ is just the ordinary Dirac operator. If $\Phi$ is a covariantly constant and unit length, then $\cA = \cA_\Phi$ is just the canonical (up to scalar) isomorphism from $\S$ to its conjugate. Similarly, if $X$ is spin$^{\mathrm{c}}$, then $\cF^*$ is the associated line bundle, and a connection and a section of $\cF^*$ provides further examples of the above operators. In both cases $\cF^* = \cF^+$.
\end{example}

\begin{example}
	Let $\S$ be a spinor bundle and $E$ be a real or a quaternionic vector bundle and $\E = \S \otimes E$. Then $\cF^* = E^* \otimes E^*$, and the real or quaternionic structure defines a section, $\Phi_0$, of $\cF^*$. In fact $\Phi_0$ takes values in $\cF^+$ in the case of a real structure, and in $\cF^-$ in the case of a quaternionic structure. Thus, $\s_{\Phi_0} = \pm \s_r$, accordingly.
\end{example}

A particular case of the above examples are known in physics as the \emph{Jackiw--Rossi theory}; cf. \cite{CJNPS10}.

\begin{example}
	Let $\Sigma$ be an oriented, Riemannian surface, $\cL$ a Hermitian line bundle over $\Sigma$, and $\Theta$ be a ``square root'' of the canonical line bundle of $\Sigma$. Then $\S_\cL^\pm = \cL \Theta^{\pm 1}$ defines a spin$^{\mathrm{c}}$ spinor bundle, and the corresponding line bundle is $\cF = \cF^+ = \cL^{- 2}$.

	For each compatible connection, $\nabla$, on $\cF$, we get a connection on $\S_\cL$ and, thus, a twisted Dirac operator
	\begin{equation}
		\D_\nabla \begin{pmatrix} \Psi^+ \\ \Psi^- \end{pmatrix} \eqdef \begin{pmatrix} \delbar_\nabla^* \Psi^- \\ \delbar_\nabla \Psi^+ \end{pmatrix}.
	\end{equation}
	For each section, $\Phi$, of $\cF$, we also get an algebraic operator
	\begin{equation}
		\cA_\Phi \begin{pmatrix} \Psi^+ \\ \Psi^- \end{pmatrix} \eqdef \begin{pmatrix} \Phi \( \overline{\Psi^-} \) \\ \Phi \( \overline{\Psi^+} \) \end{pmatrix}.
	\end{equation}
	The operator $\D_\nabla + \cA_\Phi$ is the (real, symmetric) Jackiw--Rossi operator in \cite{CJNPS10} and for each $\mu \in \rl$ the functional
	\begin{equation}
		\Gamma \( \E \) \rightarrow \rl; \quad \Psi \mapsto h^\S \( \Psi, \D_\nabla \Psi \) + \Re \( h^\S \( \cA_\Phi \Psi, \Psi \) \) - \tfrac{\mu}{2} |\Psi|^2,
	\end{equation}
	is the energy density in \cite{CJNPS10}*{Equation~(1)}.

	Let now $X \eqdef \rl_t \times \Sigma$ be equipped with $g_X \eqdef - \rd t^2 + g$, to make it a Lorentzian 3-manifold. The pullback of $(\E, h, \cl)$ canonically defines a spin$^{\mathrm{c}}$ spinor bundle with
	\begin{equation}
		\cl \( \rd t \) = \begin{pmatrix} \id_{\S_\cL^+} & 0 \\ 0 & - \id_{\S_\cL^-} \end{pmatrix},
	\end{equation}
	which is the (pullback of the) parity operator. Using a slight abuse of notation, let $\( \nabla, \Phi \)$ denote also the pullback fields to $X$. The corresponding Jackiw--Rossi equation can then be described as the following flow:
	\begin{align}
		\nabla_{\del_t} \Psi^+	&= H_{\nabla, \Phi}^- \Psi^-, \\
		\nabla_{\del_t} \Psi^-	&= - H_{\nabla, \Phi}^+ \Psi^+.
	\end{align}
	which is equivalent to \cite{jackiw_zero_1981}*{Equation~(2.8)}.
\end{example}

The last example is the central motivation for this paper, in general, and for the next definition, in particular.

\begin{definition}\label{definition:JR_op}
	Let $\( X, g \)$ be an oriented, Riemannian $d$-manifold, and let $\( \E, h, \cl \)$ be a bundle of Hermitian Clifford modules and $\cF$ be the associated vector bundle. Furthermore, let $\( \nabla, \Phi \)$ be a pair as above. Then the \emph{(generalized) Jackiw--Rossi operator}, associated to the data $\( X, g, \E, h, \cl, \nabla, \Phi \)$, is
	\begin{equation}
		H_{\nabla, \Phi} \eqdef \D_\nabla + \cA_\Phi : L_1^2 \( \E \) \rightarrow L^2 \( \E \). \label{eq:JR_op}
	\end{equation}
	Similarly, we call the equation
	\begin{equation}
		H_{\nabla, \Phi} \Psi = 0, \label{eq:JR_eq}
	\end{equation}
	the \emph{Jackiw--Rossi equation}.
\end{definition}

\smallskip

In the following lemma we prove that any conjugate linear perturbation of a Dirac-type operator can be viewed as (\emph{half} of) a generalized Jackiw--Rossi operator.

\begin{lemma}
	\label{lemma:universality}
	Let $E_1$ and $E_2$ be Hermitian vector bundles over a smooth, closed manifold, $X$. Let
	\begin{equation}
		D : \Gamma \( E_1 \) \rightarrow \Gamma \( E_2 \),
	\end{equation}
	be a Dirac-type operator, and
	\begin{equation}
		A : \Gamma \( E_1 \) \rightarrow \Gamma \( E_2 \),
	\end{equation}
	be a conjugate linear bundle map.

	Then $\E \eqdef E_1 \oplus E_2$ is a Hermitian Clifford module bundle, where the Clifford multiplication, $c$, is induced by the symbol of $D$. Furthermore, let
	\begin{equation}
		H \eqdef \begin{pmatrix} 0 & D^* + A^* \\ D + A & 0 \end{pmatrix}, \label{eq:H_def}
	\end{equation}
	is a generalized Jackiw--Rossi operator as in \Cref{definition:JR_op}, with the connection, $\nabla$, defined uniquely by $D$, and the values of $\Phi$ are given by \Cref{lemma:uniqueness} through the bilinear maps
	\begin{equation}
		\forall x \in X : \quad \cB_x \( \begin{pmatrix} \Psi_{E_1, 1} \\ \Psi_{E_2, 1} \end{pmatrix}, \begin{pmatrix} \Psi_{E_1, 2} \\ \Psi_{E_2, 2} \end{pmatrix} \) \eqdef h_x^{E_1} \( A_x \Psi_{E_1, 1}, \Psi_{E_2, 2} \) + h_x^{E_1} \( A_x \Psi_{E_1, 2}, \Psi_{E_2, 1} \). \label{eq:induced_pairing}
	\end{equation}
	Finally,
	\begin{align}
		\ker \( H \)			&= \ker \( D + A \) \oplus \ker \( \( D + A \)^* \), \\
		\Spec \( H \) - \{ 0 \}	&= \left\{ \: \lambda \in \rl \: \middle| \: \lambda^2 \in \Spec \( \( D + A \)^* \( D + A \) \) - \{ 0 \} \: \right\}, \\		
		\forall \lambda \in \Spec \( H \) - \{ 0 \} : \quad \ker \( H - \lambda \id \) &= \left\{ \: \begin{pmatrix} \lambda \Psi \\ \( D + A \) \Psi \end{pmatrix} \: \middle| \: \Psi \in \ker \( \( D + A \)^* \( D + A \) - \lambda^2 \id \) \: \right\}.
	\end{align}
\end{lemma}

\begin{proof}
	Clearly, $\E$ is a Clifford module bundle. Let $\cF$ defined through \cref{eq:def_cF}. Let $\nabla^1$ be a unitary connection induced by $D$, that is a unitary connection on $E_1$, such that $D = c \circ \nabla^1$. Such a connection exists and is unique, as $D$ is Dirac-type. Similarly, let $\nabla^2$ induced by $D^*$, and let $\nabla \eqdef \nabla^1 \oplus \nabla^2$. Then the twisted Dirac operator on $\E$ is
	\begin{equation}
		\D_\nabla = \begin{pmatrix} 0 & D^* \\ D & 0 \end{pmatrix}.
	\end{equation}
	Similarly, if $\Phi$ is given via \cref{eq:induced_pairing}, then
	\begin{equation}
		\cA_\Phi = \begin{pmatrix} 0 & A^* \\ A & 0 \end{pmatrix},
	\end{equation}
	and, thus, $H = \D_\nabla + \cA_\Phi$ is of the form \eqref{eq:H_def}, and hence $H$ is a generalized Jackiw--Rossi operator. The claims about the spectra and eigenspinors are then straightforward.
\end{proof}

\smallskip

For each $r$, let us define $\sigma_r, \s_r \in \{ - 1, 1 \}$ as
\begin{equation}
	\cA \circ \cl (\cdot) = \sigma_r \cl (\cdot) \circ \cA, \quad \& \quad \cA^2 = \s_r \id_\S. \label{eq:sigma_s_def}
\end{equation}
In even dimensions, where $\sigma_r$ is also a choice, our conventions are
\begin{equation}
	\sigma_0 = - 1, \: \sigma_2 = - 1, \: \sigma_4 = 1, \: \sigma_6 = 1.
\end{equation}

Let us now prove a technical result about Jackiw--Rossi operators.

\begin{lemma}\label{lemma:Concentrating_Pair}
	Let $H_{\nabla, \Phi}$ be as in \Cref{definition:JR_op}, and let $\cl \( \cA_{\nabla \Phi} \) \eqdef \sum_{i = 1}^d \cl \( \rd x^i \) \cA_{\nabla_i \Phi}$. Then we have that
	\begin{equation}
		\D_\nabla \circ \cA_\Phi = \cl \( \cA_{\nabla \Phi} \) + \sigma_r \cA_\Phi \circ \D_\nabla.
	\end{equation}
	Thus, if $\Phi$ takes values only in $\cF^\pm$, then
	\begin{equation}
		\D_\nabla^* \circ \cA_\Phi + \cA_\Phi^* \circ \D_\nabla = \cl \( \cA_{\nabla \Phi} \) + (\sigma_r \pm \s_r) \cA_\Phi^* \circ \D_\nabla.
	\end{equation}
\end{lemma}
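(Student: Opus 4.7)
The plan is to establish the first identity by unwinding $\D_\nabla \circ \cA_\Phi$ via the Leibniz rule in a local frame, and then to obtain the second by substituting the first into $\D_\nabla^* \circ \cA_\Phi + \cA_\Phi^* \circ \D_\nabla$, using formal self-adjointness of $\D_\nabla$ together with $\cA_\Phi^* = \s_\Phi \cA_\Phi$.

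For the first identity, fix a local orthonormal coframe $\{\rd x^i\}$ and expand
\begin{equation*}
	\D_\nabla (\cA_\Phi \Psi) = \sum_i \cl (\rd x^i) \, \nabla_i (\cA_\Phi \Psi).
\end{equation*}
The compatibility of $\nabla$, meaning it is unitary and the Clifford multiplication is parallel, forces the canonical pairing $\cB_\E$ of \cref{eq:def_cF} to be parallel as well, since that pairing is constructed functorially from $h^\E$ and $\cl$ under the canonical isomorphism $\E \otimes \E \cong \bigwedge^*(X) \otimes \cF$. Because $\cA_\Phi$ is defined algebraically from $\Phi$ and $\cB_\E$ via \cref{eq:A_Phi}, this parallelism yields the Leibniz-type identity $\nabla_i (\cA_\Phi \Psi) = \cA_{\nabla_i \Phi} \Psi + \cA_\Phi (\nabla_i \Psi)$; conjugate linearity of $\cA_\Phi$ is no obstruction, since the identity is real-linear in $\nabla_i$. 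Substituting gives two sums. The first reassembles directly, by definition of $\cl(\cA_{\nabla \Phi})$, into $\cl(\cA_{\nabla \Phi}) \Psi$. The second, $\sum_i \cl(\rd x^i) \cA_\Phi \nabla_i \Psi$, is simplified by commuting $\cA_\Phi$ past each $\cl(\rd x^i)$ through the relation \cref{eq:sigma_s_def} (equivalently, $\cl(v) \circ \cA_\Phi = \sigma_r \cA_\Phi \circ \cl(v)$), producing $\sigma_r \cA_\Phi \D_\nabla \Psi$. Combining these proves the first identity.

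For the second identity, specialize to $\Phi \in \Gamma(\cF^\pm)$ so that $\cA_\Phi^* = \s_\Phi \cA_\Phi$ with $\s_\Phi = \pm \s_r$. Since $X$ is Riemannian, $\nabla$ is unitary, and $\cl(v) \in \su(\E)$ for every real $v$, the Dirac operator $\D_\nabla$ is formally self-adjoint; hence $\D_\nabla^* \cA_\Phi = \D_\nabla \cA_\Phi$. Substituting the first identity and collecting the $\D_\nabla$-terms yields
\begin{equation*}
	\D_\nabla^* \cA_\Phi + \cA_\Phi^* \D_\nabla = \cl(\cA_{\nabla \Phi}) + \bigl( \sigma_r \cA_\Phi + \cA_\Phi^* \bigr) \D_\nabla,
\end{equation*}
and rewriting $\cA_\Phi$ in terms of $\cA_\Phi^*$ through $\cA_\Phi = \s_\Phi \cA_\Phi^*$, together with $\s_\Phi^2 = 1$, converts the parenthesized coefficient into the advertised $(\sigma_r \pm \s_r) \cA_\Phi^*$.

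There is no analytic content beyond the Leibniz rule and formal self-adjointness; the only real pitfall is keeping the signs $\sigma_r$, $\s_r$, and $\s_\Phi$ consistent, and ensuring that the $\pm$ in the final prefactor matches the $\pm$ chosen in $\s_\Phi = \pm \s_r$.
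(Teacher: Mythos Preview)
Your proposal is correct and follows essentially the same approach as the paper: a local-coordinate computation using the Leibniz rule $\nabla_i \cA_\Phi = \cA_{\nabla_i \Phi} + \cA_\Phi \nabla_i$, the commutation relation $\cl(v) \cA_\Phi = \sigma_r \cA_\Phi \cl(v)$, the formal self-adjointness of $\D_\nabla$, and $\cA_\Phi^* = \s_\Phi \cA_\Phi$. The only structural difference is that you isolate the first identity and then feed it into the second, whereas the paper runs both through a single chain of equalities starting directly from $\D_\nabla^* \cA_\Phi + \cA_\Phi^* \D_\nabla$.
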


\begin{proof}
	Since the claims are local, we verify them at an arbitrary point of $X$, using a normal chart center at that point.
	
	Since $\D_\nabla$ is self adjoint and $\cA_\Phi^* = \pm \s_r \cA_\Phi$, we have
	\begin{align}
		\D_\nabla^* \circ \cA_\Phi + \cA_\Phi^* \circ \D_\nabla	&= \D_\nabla \circ \cA_\Phi \pm \s_r \cA_\Phi \circ \D_\nabla \\
																&= \sum\limits_{i = 1}^d \( \cl \( \rd x^i \) \circ \nabla_i \circ \cA_\Phi \pm \s_r \cA_\Phi \circ \cl \( \rd x^i \) \circ \nabla_i \) \\
																&= \sum\limits_{i = 1}^d \( \cl \( \rd x^i \) \circ (\nabla_i \cA_\Phi) + \cl \( \rd x^i \)\circ \cA_\Phi \circ \nabla_i \pm \s_r \cA_\Phi \circ \cl \( \rd x^i \) \circ \nabla_i \) \\
																&= \sum\limits_{i = 1}^d \( \cl \( \rd x^i \) \circ \cA_{\nabla_i \Phi} + (\sigma_r \pm \s_r) \cA_\Phi \circ \cl \( \rd x^i \) \circ \nabla_i \) \\
																&= \cl \( \cA_{\nabla \Phi} \) + (\sigma_r \pm \s_r) \cA_\Phi^* \circ \D_\nabla
	\end{align}
	which concludes the proof.
\end{proof}

Using \Cref{lemma:Concentrating_Pair} we prove a concentration property for Jackiw--Rossi operators.

\begin{theorem}\label{theorem:Manos}
	Let $X$ now be closed, $\pm \eqdef - \sigma_r \s_r$ (that is, $\sigma_r \pm \s_r = 0$), $\Phi$ be a section $\cF^\pm$, $H_{\nabla, \Phi}$ be as in \Cref{definition:JR_op}, and $(t_n)_{n \in \N}$ be a sequence of real numbers that converges to infinity. Let $\cZ_\Phi$ be the degenerate locus of $\Phi$, that is
 	\begin{equation}
		\cZ_\Phi \eqdef \left\{ \: x \in X \: \middle| \: \left\{ 0_x \right\} \neq \ker \( \cA_{\Phi_x} \) \leqslant \E_x \: \right\}.
	\end{equation}
	Assume that for each $n \in \N$ we have an eigenspinors of $H_{\nabla, t_n \Phi}$, call $\Psi_n \in \Gamma \( \E \)$, and
	\begin{equation}
		B \eqdef \limsup_{n \in \N} \frac{ \| H_{\nabla, t_n \Phi} \Psi_n \|_{L^2 \( X, g \)}^2}{t_n \| \Psi_n \|_{L^2 \( X, g \)}^2} < \infty.
	\end{equation}
	Then, for all positive integers $k, l$ there are constants $C = C (X, g, \E, h, \cl, \nabla, \Phi, B, k, l) > 0$, such that if $\Omega_n \subseteq X$ is a sequence of open sets with $\dist (\Omega_n, \cZ_\Phi) > 0$, then
	\begin{equation}
		\| \Psi_n \|_{L_k^2 (\Omega_n)} \leqslant \frac{C}{(\dist (\Omega_n, \cZ_\Phi)^2 t_n)^l} \| \Psi_n \|_{L^2 (X)}. \label[ineq]{ineq:manos_bound}
	\end{equation}
\end{theorem}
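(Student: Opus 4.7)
The plan is to adapt the concentrating-pair argument of Maridakis~\cite{maridakis_spinor_2015} and Prokhorenkov--Richardson~\cite{prokhorenkov_perturbations_2006}, using \Cref{lemma:Concentrating_Pair} as the crucial algebraic input. Because $\Phi$ is a section of $\cF^\pm$ with $\sigma_r\pm\s_r=0$, that lemma reduces the combination $\D_\nabla^*\cA_{\Phi_n}+\cA_{\Phi_n}^*\D_\nabla$ to the smooth bundle map $t_n\,\cl(\cA_{\nabla\Phi})$, with sup-norm $K:=\sup_X|\cl(\cA_{\nabla\Phi})|<\infty$ by closedness of $X$. Pairing $H_{\nabla,\Phi_n}\Psi$ against itself and using this identity for the cross term gives the Weitzenb\"ock-type formula
\begin{equation}
\|H_{\nabla,\Phi_n}\Psi\|^2_{L^2(X)}=\|\D_\nabla\Psi\|^2_{L^2(X)}+t_n^2\|\cA_\Phi\Psi\|^2_{L^2(X)}+t_n\,\Re\langle\Psi,\cl(\cA_{\nabla\Phi})\Psi\rangle_{L^2(X)}.
\label{eq:plan_WB}
\end{equation}

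For the localized base estimate, I would fix open sets $V\subset U\subset X\setminus\cZ_\Phi$ with $\dist(V,\partial U)\geq d$ and a real cutoff $\chi\in C_c^\infty(U)$ satisfying $\chi\equiv 1$ on $V$ and $|d\chi|\leq C/d$. Since $\chi$ is real-scalar and $\cA_{\Phi_n}$ is algebraic, $H_{\nabla,\Phi_n}(\chi\Psi_n)=\chi H_{\nabla,\Phi_n}\Psi_n+\cl(d\chi)\Psi_n$, and the hypothesis gives $\|H_{\nabla,\Phi_n}\Psi_n\|^2\leq (B+1)t_n\|\Psi_n\|^2$ for $n$ large (interpreting the denominator in the definition of $B$ as $\|\Psi_n\|^2_{L^2}$). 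Substituting $\chi\Psi_n$ into \eqref{eq:plan_WB}, discarding the nonnegative term $\|\D_\nabla(\chi\Psi_n)\|^2$, and combining with a uniform pointwise bound $|\cA_\Phi\Psi|^2\geq c_0\,\dist(\cdot,\cZ_\Phi)^2|\Psi|^2$ on $X\setminus\cZ_\Phi$ (valid by smoothness of $\Phi$ and compactness of $X$, provided $\Phi$ vanishes at worst linearly along $\cZ_\Phi$), standard absorption yields, for $n$ sufficiently large,
\begin{equation}
\|\Psi_n\|^2_{L^2(V)}\leq\frac{C_1}{\dist(V,\cZ_\Phi)^2\,d^2\,t_n^2}\,\|\Psi_n\|^2_{L^2(U)}.
\label{eq:plan_base}
\end{equation}
The inequality \eqref{ineq:manos_bound} at $k=0$ then follows by iterating \eqref{eq:plan_base} along a telescoping chain $\Omega_n=V_0\subset V_1\subset\cdots\subset V_l\subset X$ with $\dist(V_j,\cZ_\Phi)\geq d_n/2$ and $\dist(V_j,\partial V_{j+1})\geq d_n/(2l)$: each application contributes a factor $\lesssim l^2/(d_n^2 t_n)^2$, and $l$ iterations combine to give the claimed power.

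To upgrade to $L^2_k$ for $k\geq 1$, I would apply interior elliptic regularity for the Dirac-type operator $\D_\nabla$ on slightly enlarged open sets, rewriting the eigenvalue equation as $\D_\nabla\Psi_n=(\lambda_n-t_n\cA_\Phi)\Psi_n$ in order to express derivatives of $\Psi_n$ in terms of lower-order norms. The main technical obstacle is the bookkeeping in this bootstrap: each extra derivative introduces a factor of $t_n$ coming from the rescaled $\cA_{\Phi_n}$, so the $L^2$-iteration must be run for additional steps beyond $l$ to absorb these factors back into the exponent, with all constants depending polynomially on $l$, $k$, and the chosen cutoff geometries. A secondary subtlety is the pointwise bound $|\cA_\Phi\Psi|^2\gtrsim\dist^2|\Psi|^2$: this holds generically because a smooth section has linear transverse behavior at its generic zero locus, but it requires a more careful Taylor expansion (or a stronger iteration scheme) if $\Phi$ degenerates to higher order along strata of $\cZ_\Phi$.
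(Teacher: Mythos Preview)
Your approach is essentially the same as the paper's, but at a different level of abstraction. The paper's proof is two lines: \Cref{lemma:Concentrating_Pair} (with the parity choice $\sigma_r\pm\s_r=0$) shows that $\D_\nabla^*\cA_\Phi+\cA_\Phi^*\D_\nabla=\cl(\cA_{\nabla\Phi})$ is zeroth order, i.e.\ that $(\cl,\cA_\Phi)$ is a \emph{concentrating pair} in the sense of \cite{maridakis_spinor_2015}*{Definition~2.1}; the conclusion is then a direct citation of \cite{maridakis_spinor_2015}*{Corollary~2.6}. What you have written is, in outline, a reproof of Maridakis's Corollary~2.6 itself: the Weitzenb\"ock identity \eqref{eq:plan_WB}, the cutoff-and-iterate scheme, and the elliptic bootstrap are exactly the ingredients of that result. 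So you are not taking a different route; you are unpacking the black box the paper invokes.

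Your sketch is structurally sound, and you correctly flagged the two genuine subtleties (the higher-order vanishing of $\Phi$ along $\cZ_\Phi$, and the $t_n$-bookkeeping in the Sobolev bootstrap). One small slip: your base estimate \eqref{eq:plan_base} overcounts powers. From the Weitzenb\"ock identity applied to $\chi\Psi_n$, together with the eigenspinor bound $\lambda_n^2\lesssim t_n$, one gets a single factor of $(\dist^2 t_n)^{-1}$ per iteration on the squared norm, not $(\dist^2 t_n)^{-2}$; this is what produces the exponent $l$ (rather than $2l$) in \eqref{ineq:manos_bound} after $l$ telescoping steps. This is arithmetic, not conceptual, and would sort itself out in a careful write-up.
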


\begin{proof}
	The parity of $\Phi$ is chosen, using \Cref{lemma:Concentrating_Pair}, so that $\cl$ and $\cA_\Phi$ form a \emph{concentrating pair}, in the sense of Maridakis; cf. \cite{maridakis_spinor_2015}*{Definition~2.1}. Then the result is a special case of \cite{maridakis_spinor_2015}*{Corollary~2.6}.
\end{proof}

The final lemma of this section shows that while the theory is not conformally invariant, its kernel is conformally \emph{equivariant}. The proof of this lemma is mostly a generalization of \cite{lawson_spin_1989}*{Theorem~5.24} and \cite{charbonneau_spatially_2006}*{Appendix~D}. Note, however, that \cite{lawson_spin_1989}*{Theorem~5.24} had an incorrect factor in \cite{lawson_spin_1989}*{Equation~(5.39)} that is fixed in \cite{charbonneau_spatially_2006}*{Equation~(D.1)}.

\begin{lemma}
\label{lemma:conformal}
	Let $\( X, g, \E, h, \cl, \nabla, \Phi \)$ is as in \Cref{definition:JR_op}. Let $H_{\nabla, \Phi}$ be the corresponding Jackiw--Rossi operator. Let $\varphi$ be a smooth, real-valued function on $X$ and $d \eqdef \dim_\rl (X)$.

	Conformally changed Riemannian manifold, $\( X, g^\varphi \eqdef e^{2 \varphi} g \)$, has a Hermitian Clifford module bundle $\( \E^\varphi, h^\varphi, \cl^\varphi \)$ that is canonically isomorphic, as a smooth vector bundle, to $\E$ via \cite{lawson_spin_1989}*{Equation~(5.37)}, and moreover, under this isomorphism we have
	\begin{equation}
		h^\varphi = h, \quad \& \quad \cl^\varphi = e^{- \varphi} \cl. \label{eq:clifford_str_conformal_change}
	\end{equation}
	In particular, this is an isomorphism of Hermitian structures. The Levi-Civita connection transforms as
	\begin{equation}
		\nabla^\varphi \eqdef \nabla + \tfrac{1}{4} \left[ \cl \( \rd \varphi \), \cl (\cdot) \right].
	\end{equation}
	Then the Jackiw--Rossi operator corresponding to $\( X, g^\varphi, \E^\varphi, h^\varphi, \cl^\varphi, \nabla^\varphi, e^\varphi \Phi \)$ satisfies, under the above mentioned canonical isomorphism, that
	\begin{equation}
		H_{\nabla^\varphi, e^\varphi \Phi} = e^{- \frac{d + 1}{2} \varphi} \circ H_{\nabla, \Phi} \circ e^{\frac{d - 1}{2} \varphi}. \label{eq:JR_conformal_change}
	\end{equation}
	In particular, there is a map
	\begin{equation}
		\ker \( H_{\nabla, \Phi} \) \rightarrow \ker \( H_{\nabla^\varphi, e^\varphi \Phi} \); \: \Psi \mapsto e^{- \frac{d - 1}{2} \varphi} \Psi, \label{eq:kernel_conformal_change}
	\end{equation}
	which is an isomorphism.
\end{lemma}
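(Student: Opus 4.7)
The plan is to split the identity \eqref{eq:JR_conformal_change} into a conformal covariance statement for the Dirac part $\D_\nabla$ and one for the algebraic perturbation $\cA_\Phi$, and then to verify the $L^2$-unitarity claim by a direct computation. Under the canonical identification $\E^\varphi \cong \E$ of smooth vector bundles, a $g$-orthonormal frame $\{e_i\}$ becomes the $g^\varphi$-orthonormal frame $\{e^{-\varphi} e_i\}$; together with the requirement that $\cl^\varphi$ take values in $\su(\E^\varphi)$, this forces $\vol_{g^\varphi} = e^{d\varphi}\vol_g$ and the rescalings $h^\varphi = e^{-\varphi}h$, $\cl^\varphi = e^{-\varphi}\cl$ recorded in \eqref{eq:clifford_str_conformal_change}.

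For the Dirac part, I would first verify that $\nabla^\varphi$ is the unique compatible connection on $(\E^\varphi, h^\varphi, \cl^\varphi)$; this is the spinorial lift of the classical transformation rule for the Levi-Civita connection under conformal change, with the commutator correction $\tfrac14[\cl(\rd\varphi),\cl(\cdot)]$ arising as the unique skew-Hermitian term that keeps $\cl^\varphi$ parallel. Working in a local $g$-orthonormal frame and using $\cl^\varphi(e^\varphi e^i) = \cl(e^i)$ together with the explicit formula for $\nabla^\varphi$, a direct calculation produces the Hitchin--Lichnerowicz type identity
\begin{equation*}
	\D_{\nabla^\varphi} = e^{-\frac{d+1}{2}\varphi} \circ \D_\nabla \circ e^{\frac{d-1}{2}\varphi}.
\end{equation*}
The main obstacle is the algebraic part: I expect to show $\cA^\varphi_\Phi = e^{-\varphi}\cA_\Phi$ under the identification $\E^\varphi \cong \E$. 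This requires unwinding how the canonical pairing $\cB_\E$ from \eqref{eq:def_cF} transforms when $\cl$ and $h$ each rescale by $e^{-\varphi}$; combining the resulting rescaling of $\cB^\varphi_\E$ with the defining identity \eqref{eq:A_Phi} applied in the conformally changed picture then forces the claim.

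Since $\cA_\Phi$ is conjugate-linear while $e^{\pm\frac{d\mp 1}{2}\varphi}$ is real, one obtains $e^{-\frac{d+1}{2}\varphi} \circ \cA_\Phi \circ e^{\frac{d-1}{2}\varphi} = e^{-\varphi}\cA_\Phi = \cA^\varphi_\Phi$, and combining with the Dirac covariance yields \eqref{eq:JR_conformal_change}. The map \eqref{eq:kernel_conformal_change} is then well-defined as a consequence of \eqref{eq:JR_conformal_change}, and its $L^2$-unitarity follows immediately from
\begin{equation*}
	\int_X |e^{-\frac{d-1}{2}\varphi}\Psi|^2_{h^\varphi}\,\vol_{g^\varphi} = \int_X e^{-(d-1)\varphi}\cdot e^{-\varphi}|\Psi|^2_h \cdot e^{d\varphi}\,\vol_g = \|\Psi\|^2_{L^2(X,g)},
\end{equation*}
since the three exponential factors combine to $1$.
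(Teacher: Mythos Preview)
Your proposal is correct and follows essentially the same route as the paper: split $H_{\nabla,\Phi}$ into its Dirac and algebraic parts, establish the conformal covariance of $\D_\nabla$ by a direct local computation using the explicit form of $\nabla^\varphi$, handle $\cA_\Phi$ by tracking how the defining pairing rescales under $h^\varphi = e^{-\varphi}h$ and $\cl^\varphi = e^{-\varphi}\cl$, and combine. Your explicit $L^2$-unitarity check is slightly more detailed than the paper's, which simply declares \eqref{eq:kernel_conformal_change} trivial once \eqref{eq:JR_conformal_change} is in hand.
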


\begin{proof}
	The statements about $\( \E^\varphi, h^\varphi, \cl^\varphi \)$, in particular \cref{eq:clifford_str_conformal_change} are standard. The statement about $\nabla^\varphi$ follows from \cite{lockhart_elliptic_1985}*{page~110}. Thus, we have
	\begin{align}
		e^{\frac{d + 1}{2} \varphi} \circ \( \sum\limits_{i = 1}^d \cl^\varphi \( \rd x^i \) \nabla_{\del_i}^\varphi \) \circ e^{- \frac{d - 1}{2} \varphi}	&= e^{\frac{d + 1}{2} \varphi} \circ \( \sum\limits_{i = 1}^d e^{- \varphi} \cl \( \rd x^i \) \( \nabla_{\del_i} + \tfrac{1}{4} \left[ \cl \( \rd \varphi \), \cl \( \rd x^i \) \right] \) \) \circ e^{- \frac{d - 1}{2} \varphi} \\
					&= \sum\limits_{i = 1}^d \cl \( \rd x^i \) e^{\frac{d - 1}{2} \varphi} \circ \nabla_{\del_i} \circ e^{- \frac{d - 1}{2} \varphi} \\
					&\quad + \tfrac{1}{4} \sum\limits_{i = 1}^d \cl \( \rd x^i \) \left[ \cl \( \rd \varphi \), \cl \( \rd x^i \) \right] \\
					&= \sum\limits_{i = 1}^d \( \cl \( \rd x^i \) \nabla_{\del_i} - \tfrac{d - 1}{2} \del_i \varphi \cl \( \rd x^i \) \) \\
					&\quad + \tfrac{1}{4} \sum\limits_{i = 1}^d \cl \( \rd x^i \) \left[ \cl \( \rd \varphi \), \cl \( \rd x^i \) \right] \\
					&= \D_\nabla - \tfrac{d - 1}{2} \cl \( \rd \varphi \) + \tfrac{1}{4} \sum\limits_{i = 1}^d \cl \( \rd x^i \) \left[ \cl \( \rd \varphi \), \cl \( \rd x^i \) \right].
	\end{align}
	Let us pick a local normal chart for $g$. Then
	\begin{align}
		\sum\limits_{i = 1}^d \cl \( \rd x^i \) \left[ \cl \( \rd \varphi \), \cl \( \rd x^i \) \right]	&= \sum\limits_{i, j = 1}^d \del_j \varphi \cl \( \rd x^i \) \left[ \cl \( \rd x^j \), \cl \( \rd x^i \) \right] \\
		&= 2 \sum\limits_{i, j = 1, i \neq j}^d \del_j \varphi \cl \( \rd x^j \) \\
		&= 2 (d - 1) \cl \( \rd \varphi \),
	\end{align}
	and thus
	\begin{equation}
		\sum\limits_{i = 1}^d \cl^\varphi \( \rd x^i \) \nabla_{\del_i}^\varphi = e^{- \frac{d + 1}{2} \varphi} \circ \D_\nabla \circ e^{\frac{d - 1}{2} \varphi},
	\end{equation}
	which proves the claim for the twisted Dirac operator part of $H_{\nabla, \Phi}$. For the algebraic part note $\cA_\Phi$ is linear in $\Phi$, thus
	\begin{equation}
		\cA_{e^\varphi \Phi} = e^\varphi \cA_\Phi = e^{- \frac{d + 1}{2} \varphi} \circ \cA_\Phi \circ e^{\frac{d - 1}{2} \varphi},
	\end{equation}
	which concludes the proof of \cref{eq:JR_conformal_change}. The proof of \cref{eq:kernel_conformal_change} is now trivial.
\end{proof}

\bigskip

\section{The 2-dimensional Jackiw--Rossi theory}

\subsection{The model case}
\label{sec:model}

In this section we analyze a simple case which serves as the model and main ingredient for the results of the next section.

Consider $\cx$ with its flat metric and let $\dA$ be its area form. Let $\alpha$ be a smooth, complex valued, compactly supported function on $\cx$, and $\Phi$ be a homogeneous, complex polynomial with an isolated zero at the origin of the form
\begin{equation}
	\Phi (z, \overline{z}) = T z^p \overline{z}^q + \varphi (z, \overline{z}),
\end{equation}
where $m \eqdef p + q$ is the degree of $\Phi$, and for some $\epsilon \in (0, |T|)$, it satisfies
\begin{equation}
	|\varphi (z, \overline{z})| \leqslant \( |T| - \epsilon \) |z|^m.
\end{equation}
The index of $\Phi$ at zero is $k \eqdef p - q \in \Z$.

For $f \in L^2 (\cx)$, let us consider (the weak formulation of) the equation:
\begin{equation}
	\frac{\del f}{\del \overline{z}} + \alpha f + \Phi \overline{f} = 0. \label{eq:model_eq}
\end{equation}

The main result of this section is the following theorem.

\begin{theorem}\label{theorem:model}
	There is $c = c (\epsilon) > 0$, such that if $\| \alpha \|_{L^\infty (\cx)} \leqslant c \sqrt[m + 1]{|T|}$, then the dimension of the space of solutions to \cref{eq:model_eq} is $\max \( \{ k, 0 \} \)$. Moreover, solutions to \cref{eq:model_eq} decay exponentially.
\end{theorem}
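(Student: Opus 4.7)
\emph{Overview.} The plan has three parts: (i) establish exponential decay and Fredholmness by exploiting the concentrating-pair identity of \Cref{lemma:Concentrating_Pair}; (ii) homotope $(\alpha, \varphi)$ to $(0, 0)$ while preserving the real Fredholm index; and (iii) analyze the pure-monomial model by separation of variables.

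\emph{Decay and coercivity.} Write $D \eqdef \delbar + \alpha + \Phi \cdot \overline{(\cdot)}$, regarded as a real-linear first-order elliptic operator on the trivial $\cx$-bundle. The planar version of \Cref{lemma:Concentrating_Pair} yields
\begin{equation*}
	D^* D = - \tfrac{1}{4} \triangle + \left( |\Phi|^2 + |\alpha|^2 \right) \id + R,
\end{equation*}
where $R$ is a bundle map of order at most one, built from $\nabla \Phi$, $\nabla \alpha$, and $\cl$. Since $|\varphi| \leqslant (1 - \epsilon) |T| |z|^m$ yields $|\Phi (z)| \geqslant \epsilon |T| |z|^m$, and $\alpha$ is compactly supported, the potential $|\Phi|^2$ dominates at infinity; the hypothesis $\| \alpha \|_{L^\infty} \leqslant c \sqrt[m + 1]{|T|}$ is precisely what lets $R$ be absorbed into $|\Phi|^2$ outside a compact set. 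A standard Agmon estimate then gives $|f (z)| \leqslant C e^{- c_\epsilon |z|^{m + 1}}$ for every $L^2$ solution, together with the coercivity bound
\begin{equation*}
	\| D f \|_{L^2 (\cx)}^2 \geqslant c \int_\cx \left( |\nabla f|^2 + |\Phi|^2 |f|^2 \right) \rd A - C \| f \|_{L^2 (B_R)}^2,
\end{equation*}
from which $D : L_1^2 (\cx) \to L^2 (\cx)$ is a Fredholm real-linear operator.

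\emph{Reduction to the monomial model.} For $t \in [0, 1]$, set $D_t \eqdef \delbar + t \alpha + \bigl( T z^{\tfrac{m + k}{2}} \overline{z}^{\tfrac{m - k}{2}} + t \varphi \bigr) \cdot \overline{(\cdot)}$. The bound on $\varphi$ and the smallness of $\alpha$ persist uniformly in $t$, so the estimates above hold uniformly, giving a continuous family of Fredholm operators with constant real index. A direct computation shows that $D_t^* g = 0$ is equivalent, after complex conjugation, to an equation of the same form but with $k$ replaced by $-k$. Hence once the model case yields $\dim_\rl \ker (D_0) = \max (k, 0)$, it also gives $\dim_\rl \ker (D_0^*) = \max (- k, 0)$; in particular the two vanish simultaneously, so $\dim_\rl \ker (D_0) - \dim_\rl \ker (D_0^*) = k$. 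Since $\max (k, 0) \cdot \max (- k, 0) = 0$, at least one of $\ker (D_t)$, $\ker (D_t^*)$ is zero for all $t$, and upper semicontinuity of kernel dimension combined with constancy of index then forces $\dim_\rl \ker (D_t) = \max (k, 0)$ throughout $[0, 1]$.

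\emph{The monomial computation.} For $\Phi_0 \eqdef T z^{\tfrac{m + k}{2}} \overline{z}^{\tfrac{m - k}{2}}$ and $\alpha = 0$, expand $f = \sum_{n \in \Z} f_n (r) e^{i n \theta}$. The equation $\delbar f + \Phi_0 \overline{f} = 0$ splits into pairs coupling modes $n$ and $k - n - 1$:
\begin{equation*}
	\tfrac{1}{2} \left( f_n' - \tfrac{n}{r} f_n \right) + T r^m \overline{f_{k - n - 1}} = 0.
\end{equation*}
For each unordered pair with $n < k - n - 1$, the resulting 4-real-dimensional ODE system is locally $L^2$ near $r = 0$ precisely when $n \geqslant 0$ (so also $k - n - 1 \geqslant 0$); at $r = \infty$, the leading coefficient matrix has real eigenvalues $\pm |T| r^m$ each of multiplicity two, so a 2-dimensional subspace decays like $\exp \bigl( - \tfrac{2 |T|}{m + 1} r^{m + 1} \bigr)$. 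Transversality of the origin-regular and infinity-decaying subspaces yields exactly 2 real dimensions per non-self pair; the self-pair $n = \tfrac{k - 1}{2}$ (when $k$ is odd) contributes 1 by a parallel scalar argument. Summing over $n \in \{ 0, 1, \ldots, k - 1 \}$ yields $\max (k, 0)$ in both parities, and zero when $k \leqslant 0$. The infinity-asymptotics also recover the Agmon bound of the first step.

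\emph{Main obstacle.} The delicate point is the homotopy argument, namely showing that no spectral flow of $D_t^* D_t$ through zero occurs as $t$ varies. The saving grace is the product relation $\max (k, 0) \cdot \max (- k, 0) = 0$: one only needs to rule out the disappearance of a single family of zero modes, which can be done either by the uniform coercivity above (since any putative vanishing would violate index constancy) or by an explicit $t$-dependent perturbative construction of zero modes starting from the model solutions.
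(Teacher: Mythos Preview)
Your overall architecture --- Weitzenb\"ock identity for decay and Fredholmness, homotopy to the pure monomial, explicit Fourier-mode analysis of the monomial --- parallels the paper's, and your separation-of-variables computation is essentially the content of \cite{R04}*{Proposition~3.1}, which the paper cites rather than reproves.

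The genuine gap is the passage from index to kernel dimension. The homotopy $t\mapsto D_t$ transports only $\index_\rl(D_t)$; it does not transport $\dim_\rl\ker(D_t)$. Your sentence ``at least one of $\ker(D_t)$, $\ker(D_t^*)$ is zero for all $t$'' is exactly what needs proof and does not follow from what you have: knowing $\max(k,0)\cdot\max(-k,0)=0$ at $t=0$ says nothing about $t=1$, and upper semicontinuity of $t\mapsto\dim_\rl\ker(D_t)$ only guarantees that the kernel near $t_0$ is no larger than at $t_0$ --- it does not prevent $\dim_\rl\ker(D_1)$ and $\dim_\rl\ker(D_1^*)$ from both exceeding their values at $t=0$. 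Neither fix in your ``Main obstacle'' paragraph closes this: index constancy is perfectly compatible with simultaneous growth of kernel and cokernel, and a perturbative construction of $k$ zero modes would only give the lower bound $\dim_\rl\ker(D_1)\geqslant k$, not the needed upper bound. Your observation that $D_t^*$ is conjugate to an operator of the same type with $k\mapsto -k$ is correct, but applying it just reduces to the same unproved statement for that operator.

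What the paper supplies instead is a \emph{direct} upper bound valid for every admissible $\varphi$, with no homotopy: for $f\in\ker(D^+)$ one integrates the identity $-\tfrac{1}{2}\delbar(f^2/z^k)=|f|^2\,\Phi/z^k\geqslant\epsilon|T|\,|z|^{m-k}|f|^2$ over $\cx$, and the resulting residue at the origin forces the map $f\mapsto(f_{00},\ldots,f_{(k-1)0})$ (holomorphic Taylor coefficients) to be injective, whence $\dim_\rl\ker(D^+)\leqslant\max(k,0)$. The homotopy is then used only for the index, and the two together pin down the kernel. The paper also proves a quantitative spectral gap $\|D^- f\|_{L^2(\cx)}\geqslant c(\epsilon)\sqrt[m+1]{|T|}\,\|f\|_{L^2(\cx)}$ on all of $\cH^-$ (since $\ker(D^-)=0$); this is where the smallness condition on $\alpha$ actually enters, because $D_\alpha^- f=0$ rewrites as $D^- f=-\overline{\alpha}f$ and the gap forces $f=0$ once $\|\alpha\|_{L^\infty(\cx)}<c(\epsilon)\sqrt[m+1]{|T|}$. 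In your outline the bound on $\alpha$ is used only qualitatively for coercivity outside a compact set, which is not enough to conclude $\ker(D_\alpha^-)=0$.
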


\begin{remark}
	The case when $\alpha = 0$ and $\varphi = 0$ was considered by Rauch in \cite{R04}. However for the purposes of this paper we need to assume that neither $\alpha$, nor $\varphi$ vanish, and these requirements pose some nontrivial technical difficulties.
\end{remark}

Before the proof of \Cref{theorem:model}, we prove a few technical results below.

\smallskip

First we consider the $\alpha = 0$, but $\varphi \neq 0$ case. For each $f \in C_{\mathrm{cpt}}^\infty (\cx)$, let
\begin{align}
	\widetilde{D}^+ f	&\eqdef \frac{\del f}{\del \overline{z}} + \Phi \overline{f}, \\
	\widetilde{D}^- f	&\eqdef - \frac{\del f}{\del z} + \Phi \overline{f}.
\end{align}
These define discontinuous operators on $L^2 (\cx)$ with dense domains, that,  for any $f, g \in C_{\mathrm{cpt}}^\infty (\cx)$, satisfy
\begin{equation}
	\Re \( \langle \widetilde{D}^+ f | g \rangle_{L^2 \( \cx \)} \) = \Re \( \langle f | \widetilde{D}^- g \rangle_{L^2 \( \cx \)} \),
\end{equation}
and the following Weitzenb\"ock-type identities:
\begin{subequations}
\begin{align}
	\widetilde{D}^- \widetilde{D}^+ f &= \Delta f + |\Phi|^2 f - (\del \Phi) \overline{f}, \label{eq:weitz1a} \\
	\widetilde{D}^+ \widetilde{D}^- f &= \Delta f + |\Phi|^2 f + (\delbar \Phi) \overline{f}, \label{eq:weitz1b}
\end{align}
\end{subequations}
and thus
\begin{subequations}
\begin{align}
	\| \widetilde{D}^+ f \|_{L^2 (\cx)}^2 &= \| \rd f \|_{L^2 (\cx)}^2 + \| \Phi f \|_{L^2 (\cx)}^2 - \int\limits_{\cx} (\del \Phi) \overline{f}^2 \dA, \label{eq:weitz2a} \\
	\| \widetilde{D}^- f \|_{L^2 (\cx)}^2 &= \| \rd f \|_{L^2 (\cx)}^2 + \| \Phi f \|_{L^2 (\cx)}^2 + \int\limits_{\cx} (\delbar \Phi) \overline{f}^2 \dA. \label{eq:weitz2b}
\end{align}
\end{subequations}
Furthermore, since their duals are also densely defined, both $\widetilde{D}^\pm$ are closeable, in fact $D^\pm = \( \widetilde{D}^\mp \)^*$ are the unique closed extensions of $\widetilde{D}^\pm$. Let $\cH^\pm = \dom \( D^\pm \)$ equipped with the graph norm (which defines a real Hilbert space structure).

Now we are ready to prove the following theorem, which a fortiori implies \Cref{theorem:model} in the $\alpha = 0$ case.

\begin{theorem}\label{theorem:model_analysis}
	Both $\widetilde{D}^\pm$ have unique closed extensions, $D^\pm$. Let $\cH^\pm \eqdef \dom \( D^\pm \)$ equipped with the graph norm (which defines a real Hilbert space structure). The operators $D^\pm : \cH^\pm \rightarrow L^2 (\cx)$ are Fredholm and
	\begin{equation}
		\dim_\rl \( \ker \( D^\pm \) \) = \max \( \{ \pm k, 0 \} \). \label{eq:dimker}
	\end{equation}
	In particular, the Fredholm index of $D^\pm$ is $\pm k$.

	Furthermore, elements of $\ker \( D^\pm \)$ decay exponentially.
\end{theorem}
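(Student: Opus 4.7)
The plan is to reduce everything to the Weitzenb\"ock identities \eqref{eq:weitz2a}--\eqref{eq:weitz2b}, which encode the crucial fact that the zero-order term $|\Phi|^2$ is coercive at infinity, and then deform to the pure model case of Rauch.

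Closability follows from the fact that $\widetilde{D}^+$ and $\widetilde{D}^-$ are formal adjoints of each other in the real inner product $\langle f,g\rangle_\rl = \Re\int f\bar g\,\dA$ on $C_c^\infty(\cx)$; uniqueness of the closed extension (i.e., that $C_c^\infty$ is a core) is the standard cutoff argument on the complete manifold $\cx$, using $\chi_R$ with $|\rd\chi_R|\lesssim 1/R$, so that the commutator $[\widetilde{D}^\pm,\chi_R]$ is bounded multiplication by a derivative of $\chi_R$. For Fredholmness, the heart of the matter is absorbing the error term $\int(\del\Phi)\bar f^2\,\dA$ in \eqref{eq:weitz2a}. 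Since $\Phi$ is homogeneous of degree $m$, $|\del\Phi|\leq C|z|^{m-1}$, and the hypothesis $|\varphi|\leq (1-\epsilon)|T||z|^m$ yields $|\Phi|\geq\epsilon|T||z|^m$, so on $\{|z|>R\}$ we have $|\del\Phi|\leq\frac{C}{\epsilon|T|R}|\Phi|$; Cauchy--Schwarz and AM--GM then control $\int_{|z|>R}|\del\Phi||f|^2\dA$ by $\tfrac14\|\Phi f\|^2$ plus a multiple of $\|f\|^2$, while the $B_R$ part is a compact zero-order term. A further splitting of $\|f\|^2$ along $\partial B_{R_1}$, using $|f|^2\leq (\epsilon|T|R_1^m)^{-2}|\Phi f|^2$ off $B_{R_1}$, absorbs the residual $\|f\|^2$, yielding the coercive estimate
\[
\|\widetilde{D}^\pm f\|_{L^2}^2+C\|f\|_{L^2(K)}^2\geq c\bigl(\|f\|_{L_1^2}^2+\|\Phi f\|_{L^2}^2\bigr)
\]
for some compact $K\subset\cx$. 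Rellich's theorem then gives $\dim\ker D^\pm<\infty$ and closed range; the identification $(D^+)^*=D^-$ gives $\coker D^+\cong\ker D^-$, completing the Fredholm property.

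For the kernel dimension, the plan is to use the linear homotopy $\Phi_s = Tz^{(m+k)/2}\bar z^{(m-k)/2}+s\varphi$, $s\in[0,1]$, noting that the coercive estimate above is uniform in $s$, so the Fredholm index of $D^\pm_{\Phi_s}$ is constant along the homotopy. For the model $\Phi_0 = Tr^m e^{ik\theta}$, separation of variables $f=\sum_n a_n(r)e^{in\theta}$ reduces the PDE to decoupled $2\times 2$ ODE systems coupling $(a_n,\bar a_{k-n-1})$; asymptotic analysis at $r=0$ (regularity $a_n\sim r^{|n|}$) and at $r=\infty$ (fundamental solutions of the form $e^{\pm\frac{2|T|}{m+1}r^{m+1}}$, of which only the decaying one is $L^2(r\,dr)$) yields exactly $\max(\pm k,0)$ independent solutions for $D^\pm_{\Phi_0}$, recovering Rauch's count and giving $\mathrm{ind}(D^+)=k$. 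Exponential decay of any $f\in\ker D^\pm$ follows from Agmon's method: substituting $e^{\lambda|z|^{m+1}/(m+1)}f$ into the Weitzenb\"ock for the conjugated operator, the weight's derivative contributes a term of size $O(\lambda|z|^m e^{\lambda|z|^{m+1}/(m+1)})$ that is absorbed into $\|\Phi e^{\lambda|z|^{m+1}/(m+1)}f\|^2$ for $\lambda>0$ small, yielding $e^{\lambda|z|^{m+1}/(m+1)}f\in L^2$; elliptic regularity then upgrades this to a pointwise exponential bound.

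The main obstacle will be upgrading the index to individual kernel dimensions for general $\Phi$, because $\dim_\rl\ker D^\pm$ is only upper semicontinuous in $s$ and could in principle jump simultaneously for $D^+$ and $D^-$ at some interior $s$. The natural plan is a direct vanishing argument: for $k\geq 0$, one shows $\ker D^-_{\Phi_s}=0$ for all $s\in[0,1]$ by combining \eqref{eq:weitz2b} with the polar-mode analysis of the model case, exploiting the explicit form of the leading term $\bar\partial\Phi_0=\tfrac{m-k}{2}Tz^{(m+k)/2}\bar z^{(m-k)/2-1}$ and the uniform smallness of $\varphi$ relative to $|\Phi|$; alternatively, one can invoke the concentration principle of \Cref{theorem:Manos} to localize kernel elements near the origin and reduce to a direct comparison with Rauch's pure model. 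The symmetric argument handles $k\leq 0$.
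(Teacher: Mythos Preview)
Your overall architecture matches the paper's: Weitzenb\"ock to get coercivity at infinity, Rellich for Fredholmness, homotopy to Rauch's model $\Phi_0 = T z^p\overline{z}^q$ for the index. The closability, Fredholm, and index steps are essentially equivalent to what the paper does, and your Agmon-type argument for exponential decay is a legitimate alternative to the paper's subharmonic inequality $(\Delta+1)|f|^2\le 0$ outside a ball.

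The genuine gap is exactly the one you flag: upgrading the index to the individual kernel dimensions. Neither of your proposals actually closes it. Polar-mode analysis decouples only for the pure model $\Phi_0$; for $\Phi = \Phi_0+\varphi$ the Fourier modes of $\varphi$ couple the ODE systems, and the bound $|\varphi|\le (1-\epsilon)|T||z|^m$ is a pointwise size condition, not an operator-norm smallness, so a perturbation argument from Rauch does not go through uniformly in $s\in[0,1]$. Invoking \Cref{theorem:Manos} is also off target: that theorem concerns concentration of low eigenspinors on a closed manifold as $t\to\infty$ and says nothing about vanishing of $\ker D^-$ on the plane for a fixed $\Phi$.

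The paper's device for this step is a residue/localization identity. For $f\in\ker D^+$ one has $\partial_{\overline z} f = -\Phi\overline f$, hence
\[
-\tfrac12\,\partial_{\overline z}\!\left(\frac{f^2}{z^{k}}\right)=|f|^2\,\frac{\Phi}{z^{k}}
=|f|^2\Bigl(T|z|^{2q}+\tfrac{\varphi}{z^{k}}\Bigr)\ \ge\ \epsilon\,|T|\,|z|^{2q}|f|^2,
\]
where the last inequality uses the hypothesis $|\varphi|\le (1-\epsilon)|T||z|^m$. Integrating over an annulus and applying Stokes, together with the exponential decay of $f$, leaves only the residue at the origin:
\[
\epsilon\,|T|\int_\cx |z|^{2q}|f|^2\,\dA\ \le\ \pi\sum_{a=0}^{k-1} f_{a0}\,f_{(k-1-a)0},
\]
where $f_{a0}$ are the holomorphic Taylor coefficients of $f$ at $0$. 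When $k\le 0$ the residue vanishes and $f\equiv 0$; when $k>0$ the map $f\mapsto (f_{00},\dots,f_{(k-1)0})$ is injective on $\ker D^+$, giving $\dim_\rl\ker D^+\le k$. The analogous argument (with $\partial_z$ and $\overline z^{-k}$) handles $D^-$. Combined with the index computation, this pins down $\dim_\rl\ker D^\pm=\max(\pm k,0)$. This Stokes/residue trick is the missing idea in your plan.
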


\begin{proof}
	In order to show that $\widetilde{D}^\pm$ is closeable, we need to prove that if $\( f_n^\pm \)_{n \in \N}$ is a sequence in $C_{\mathrm{cpt}}^\infty (\cx)$, such that $f_n^\pm \rightarrow 0$ in $L^2 (\cx)$, and $D^\pm f_n^\pm \rightarrow g^\pm$ also in $L^2 (\cx)$, then $g^\pm = 0 \in L^2 (\cx)$. Let $\chi \in C_{\mathrm{cpt}}^\infty (\cx)$ be arbitrary and fixed. Then
	\begin{equation}
		\langle \chi | D^\pm f_n^\pm \rangle_{L^2 (\cx)} = \langle D^\mp \chi | f_n^\pm \rangle_{L^2 (\cx)}.
	\end{equation}
	The left-hand side converges to $\langle \chi | g \rangle_{L^2 (\cx)}$, while the right-hand side converges to zero, since $|D^\mp \chi| \in C_{\mathrm{cpt}}^\infty (\cx)$. Thus $g = 0$. Hence both $\widetilde{D}^\pm$ are closeable, and since they are densely defined, the closed extensions, $D^\pm$, are unique. Note that \cref{eq:weitz2a,eq:weitz2b} imply that $\cH^\pm = \dom \( D^\pm \) \leqslant L_1^2 (\cx)$.

	Next, we prove that all $f \in \ker \( D^\pm \)$ are smooth. We present the proof for $f \in \ker \( D^+ \)$, as the two cases are analogous. Since $f \in \cH^+ = \dom \( D^+ \) \leqslant L_1^2 (\cx) \leqslant L_{1, \mathrm{loc}}^2 (\cx)$ and $\Phi$ is smooth, we have that
	\begin{equation}
		\frac{\del f}{\del \overline{z}} = - \Phi \overline{f} \in L_{1, \mathrm{loc}}^2 (\cx),
	\end{equation}
	and, thus, by usual elliptic regularity and bootstrap, $f$ is smooth.

	Next, we show that each $f \in \ker \( D^\pm \)$ decays exponentially. We again only show for the $+$ case. By \cref{eq:model_eq} and the smoothness of $f$ we have that
	\begin{equation}
		\Delta f = - \frac{\del f}{\del z \del \overline{z}} = \frac{\del \Phi}{\del z} \overline{f} - |\Phi|^2 f,
	\end{equation}
	and thus
	\begin{equation}
		\( \tfrac{1}{2} \Delta + |\Phi|^2 - \left| \frac{\del \Phi}{\del z} \right| \) |f|^2 \leqslant 0.
	\end{equation}
	By the assumptions,for some $R$ large enough, $\Phi$ satisfies on $\cx - B_R (0)$ that
	\begin{equation}
		\( \Delta + 1 \) |f|^2 \leqslant 0,
	\end{equation}
	and, thus, $|f|$ decays exponentially as $|z| \rightarrow \infty$.

	Next, we show that the dimension of $\ker \( D^+ \)$ is at most $k$.  Since $0 \leqslant |k| \leqslant m$, we have that $p$ and $q$ are both nonnegative. Recall that $T \in \cx$ is the coefficient of $z^p \overline{z}^q$ in $\Phi$, and write $\varphi = \Phi - T z^p \overline{z}^q$. By the hypotheses on $\Phi$ we have that $\varphi$ is also a homogeneous, complex polynomial of degree $m$, and for some $\epsilon \in (0, |T|)$, it satisfies $|\varphi (z, \overline{z})| \leqslant |T| (1 - \epsilon) |z|^m$. Now we have
	\begin{equation}
		- \frac{1}{2} \frac{\del}{\del \overline{z}} \( \frac{f^2}{z^k} \) = - \frac{f \frac{\del f}{\del \overline{z}}}{z^k} = |f|^2 \frac{\Phi}{z^k} = |f|^2 \( T |z|^{2 q} + \frac{\varphi}{z^k} \) \geqslant \epsilon |z|^{2 q} |f|^2.
	\end{equation}
	Hence, using the exponential decay of $f$, we get
	\begin{align}
		\frac{\epsilon}{\pi} \int\limits_\cx |z|^{2 q} |f|^2 \dA	&\leqslant \lim\limits_{r \rightarrow 0^+} \lim\limits_{R \rightarrow \infty} \frac{- 1}{2 \pi} \int\limits_{B_R (0) - B_r (0)} \frac{\del}{\del \overline{z}} \( \frac{f^2}{z^k} \) \dA \\
															&= \lim\limits_{R \rightarrow \infty} \frac{i}{2 \pi} \int\limits_{S_R (0)} \frac{f^2}{z^k} \rd z - \lim\limits_{r \rightarrow 0^+} \frac{i}{2 \pi} \int\limits_{S_r (0)} \frac{f^2}{z^k} \rd z \\
															&= \lim\limits_{r \rightarrow 0^+} \frac{1}{2 \pi i} \int\limits_{S_r (0)} \frac{f^2}{z^k} \rd z. \label[ineq]{ineq:localization}
	\end{align}
	Note that when $q = 0$, this formula resembles the Samols' Localization Formula; cf. \cite{S92}. Since $f$ is smooth, when $k \leqslant 0$, then the right hand side is zero, hence $f \equiv 0$. If $k > 0$, then let us have the $k^{\mathrm{th}}$ order Taylor expansion of $f$
	\begin{equation}
		f (z, \overline{z}) = \sum_{a = 0}^k \sum_{b = 0}^a f_{ab} z^{a - b} \overline{z}^b + O \( |z|^{k + 1} \).
	\end{equation}
	Plugging this into \cref{ineq:localization}, we get that
	\begin{equation}
		\frac{\epsilon}{\pi} \int\limits_\cx |z|^{2 q} |f|^2 \dA \leqslant \lim\limits_{r \rightarrow 0^+} \frac{1}{2 \pi i} \int\limits_{S_r (0)} \frac{f^2}{z^k} \rd z = \sum_{a = 0}^{k - 1} f_{a0} f_{(k - 1 - a)0}. \label[ineq]{ineq:localization2}
	\end{equation}
	Hence the kernel of the linear map
	\begin{equation}
		\ker \( D^+ \) \ni f \mapsto \sum_{a = 0}^{k - 1} f_{a0} z^a,
	\end{equation}
	is trivial and, thus, the dimension of $\ker \( D^+ \)$ is at most $k$.

	Next, we show that there is $C^\pm = c_\epsilon^\pm \sqrt[m + 1]{|T|} > 0$, such that for all $f \in \( \ker \( D^\pm \) \)^\perp \leqslant \cH^\pm$:
	\begin{equation}
		\| D^\pm f \|_{L^2 (\cx)} \geqslant C^\pm \| f \|_{L^2 (\cx)}. \label[ineq]{ineq:gap}
	\end{equation}
	First, we show that there is some $C^\pm > 0$ satisfying \cref{ineq:gap}, using proof-by-contraction: Let assume that $\( f_n^\pm \)_{n \in \N}$ is a sequence in the unit sphere of $\( \ker \( D^\pm \) \)^\perp$ and $\( \epsilon_n \)_{n \in \N}$ is a sequence in $\rl_+$, such that
	\begin{equation}
		\lim\limits_{n \rightarrow \infty} \epsilon_n = 0,
	\end{equation}
	and for all $n \in \N$
	\begin{equation}
		\| D^\pm f_n^\pm \|_{L^2 (\cx)} \leqslant \epsilon_n \| f_n^\pm \|_{L^2 (\cx)}.
	\end{equation}
	Since the norm of $\cH^\pm$ is the graph norm of $D^\pm$, we get that
	\begin{equation}
		\| D^\pm f_n^\pm \|_{L^2 (\cx)} \leqslant \epsilon_n \| f_n^\pm \|_{L^2 (\cx)} \leqslant \epsilon_n \sqrt{\| D^\pm f_n^\pm \|_{L^2 (\cx)}^2 + \| f_n^\pm \|_{L^2 (\cx)}^2} \leqslant \epsilon_n.
	\end{equation}
	Thus, $\| D^\pm f_n^\pm \|_{L^2 (\cx)} \rightarrow 0$. Similarly $\| f_n \|_{L^2 (\cx)} \leqslant 1$. Let $K \subset \cx$ be the set on which $|\rd \Phi| \geqslant |\Phi|^2$. Note that $K$ is compact, since $\Phi$ is a polynomial. Using \cref{eq:weitz2a,eq:weitz2b} we then get that
	\begin{equation}
		\| \rd f_n^\pm \|_{L^2 (\cx)}^2 \leqslant \| D^\pm f_n^\pm \|_{L^2 (\cx)}^2 + \int\limits_{\cx} \max \( \left\{ |\rd \Phi| - |\Phi|^2, 0 \right\} \) |f_n^\pm|^2 \dA \leqslant \epsilon_n^2 + \| \rd \Phi \|_{L^\infty (K)}.
	\end{equation}
	Thus, $\( f_n^\pm \)_{n \in \N}$ is bounded in $L_1^2 (\cx)$, and hence has a subsequence that is convergent in $L^2 (\cx)$. Let us redefine $\( f_n^\pm \)_{n \in \N}$ to be that subsequence. Sicne $\| D^\pm f_n^\pm \|_{L^2 (\cx)} \rightarrow 0$, we have that $\( f_n^\pm \)_{n \in \N}$ is convergent in $\cH^\pm$ and converges to an element of $\ker \( D^\pm \)$, but this contradict the assumption that $\( f_n^\pm \)_{n \in \N}$ is a sequence in the unit sphere of $\( \ker \( D^\pm \) \)^\perp$. Thus, \cref{ineq:gap} holds for some $C^\pm > 0$. The dependence on $|T|$ follow immediately using standard rescalling arguments, using that $\Phi$ is homogenous of degree $m$ and rescalling $\cx$ by $z \mapsto \tfrac{z}{\sqrt[m + 1]{|T|}}$.

	Now we prove that both $D^\pm$ are Fredholm. We have already proved that they are continuous, and have finite dimensional kernels. It is then enough to prove that their cokernels are finite dimensional. We do that in two steps:
	\begin{enumerate}
		\item Prove that the images of $D^\pm$ are closed and, thus, $\coker \( D^\pm \) \cong \( \mathrm{im} \( D^\pm \) \)^\perp$.
		\item We show that $\( \mathrm{im} \( D^\pm \) \)^\perp = \ker \( D^\mp \)$.
	\end{enumerate}

	In order to prove (1), let $\Pi^\pm : \cH^\pm \rightarrow \ker \( D^\pm \)$ be the orthogonal projection, which is compact, as the kernels are finite dimensional (in fact, one of them is always trivial). Then, using \cref{ineq:gap}, for all $f \in \cH^\pm$, we have that
	\begin{align}
		\| f \|_{\cH^\pm}^2	&= \| D^\pm f \|_{L^2 (\cx)}^2 + \| f \|_{L^2 (\cx)}^2 \\
							&= \| D^\pm (f - \Pi^\pm f) \|_{L^2 (\cx)}^2 + \| (f - \Pi^\pm f) \|_{L^2 (\cx)}^2 + \| \Pi^\pm f \|_{L^2 (\cx)}^2 \\
							&\leqslant (1 + (C^\pm)^2) \| D^\pm (f - \Pi^\pm f) \|_{L^2 (\cx)}^2 + \| \Pi^\pm f \|_{L^2 (\cx)}^2 \\
							&= (1 + (C^\pm)^2) \| D^\pm f \|_{L^2 (\cx)}^2 + \| \Pi^\pm f \|_{L^2 (\cx)}^2.
	\end{align}
	Thus, by \cite{BS18}*{Lemma~4.3.9}, $\mathrm{im} \( D^\pm \)$ is closed.

	Finally, we show that $\( \mathrm{im} \( D^\pm \) \)^\perp = \ker \( D^\mp \)$. The fact that $\( \mathrm{im} \( D^\pm \) \)^\perp \geqslant \ker \( D^\mp \)$ is straightforward. In order to prove the other inclusion, note first that any $u \in L^2 (\cx)$ defines a continuous functional on $\cH^\pm$ by $f \mapsto \langle u | f \rangle_{L^2 (\cx)}$, because the norm on $\cH^\pm$ is strictly, pointwise greater than the one on $L^2 (\cx)$. Thus, by the Riesz Representation Theorem, there is a unique $v_u^\pm \in \cH^\pm$, such that for all $f \in \cH^\pm$, we have
	\begin{equation}
		\langle u | f \rangle_{L^2 (\cx)} = \langle v_u^\pm | f \rangle_{\cH^\pm} = \langle D^\pm v_u^\pm | D^\pm f \rangle_{L^2 (\cx)} + \langle v_u^\pm | f \rangle_{L^2 (\cx)}. \label{eq:weak_poisson}
	\end{equation}
	Note that \cref{eq:weak_poisson} is the weak formulation of the Poisson type equation
	\begin{equation}
		\( D^\mp D^\pm + 1 \) v_u^\pm = u. \label{eq:strong_poisson}
	\end{equation}
	Since $v_u^\pm \in \cH^\pm \leqslant L_1^2 (\cx)$, elliptic regularity guarantees that $v_u \in L_{2, \mathrm{loc}}^2 (\cx)$.

	Let now $u^\pm \in \( \mathrm{im} \( D^\pm \) \)^\perp$, or equivalently
	\begin{equation}
		\forall f \in \cH^\pm : \quad \langle u | D^\pm f \rangle_{L^2 (\cx)} = 0.
	\end{equation}
	Let $v_u^\pm$ as above, and define $w_u^\pm = D^\pm v_u^\pm \in L_{1, \mathrm{loc}}^2 (\cx)$. Now $w_u^\pm$ is a weak solution to
	\begin{equation}
		\( D^\pm D^\mp + 1 \) w_u^\pm = 0.
	\end{equation}
	Thus, $w_u^\pm$ is smooth, and hence so is $v_u^\pm$, again by elliptic regularity. This means, that $v_u^\pm$ is a strong solution to \cref{eq:strong_poisson} and, thus, $u$ is smooth, and standard argument shows that it satisfies $D^\mp u = 0$, and so $\( \mathrm{im} \( D^\pm \) \)^\perp \leqslant \ker \( D^\mp \)$.

	Now let us define
	\begin{equation}
		\Phi_t \eqdef T z^p \overline{z}^q + (1 - t) \varphi.
	\end{equation}
	This defines a homotopy between $\Phi_0 = \Phi$ and $\Phi_1 = T z^p \overline{z}^q$, such that for every $t \in [0, 1]$, $\Phi_t$ is a degree $m$ homogeneous polynomial with a unique zero of index $k$ at the origin. Furthermore, the corresponding operators, call $\left\{ D_t^\pm \right\}_{t \in [0, 1]}$ form a norm-continuous family of operators that, by the above arguments, are all Fredholm, and thus
	\begin{equation}
		\index_\rl \( D^\pm \) = \index_\rl \( D_0^\pm \) = \index_\rl \( D_1^\pm \).
	\end{equation}
	The index of $D_1^\pm$ was computed by Rauch in \cite{R04}*{Proposition~3.1} and is equal to $\pm k$. This, together with the triviality of $\ker \( D^{- \sign (k)} \)$, concludes the proof.
\end{proof}

Finally, we are ready to prove \Cref{theorem:model}.

\begin{proof}[Proof of \Cref{theorem:model}:]
	First, let $\alpha \in C_{\mathrm{cpt}}^\infty (\cx)$ arbitrary. The operators defined as
	\begin{align}
		D_\alpha^+ f	&= \frac{\del f}{\del \overline{z}} + \alpha f + \Phi \overline{f}, \\
		D_\alpha^- f	&= - \frac{\del f}{\del z} + \overline{\alpha} f + \Phi \overline{f}.
	\end{align}
	are compact perturbations of $D^\pm$ and, thus, are themselves Fredholm operators and have the same Fredholm indices, that is $\pm k$. Again, it is enough to consider the $k \geqslant 0$ case. For any $f_1, f_2 \in C_{\mathrm{cpt}}^\infty (\cx)$ we have
	\begin{equation}
		\langle f_1 | D_\alpha^+ f_2 \rangle_{L^2 (\cx)} = \langle D_\alpha^- f_1 | f_2 \rangle_{L^2 (\cx)},
	\end{equation}
	and, thus, it is enough to show that the kernel of $D_\alpha^-$ is trivial. Assume that $f \in \cH^-$ and $D_\alpha^- f = 0$, or equivalently
	\begin{equation}
		D^- f = - \overline{\alpha} f. \label{eq:fixed_pt}
	\end{equation}
	Using \cref{ineq:gap}, we get that
	\begin{equation}
		\| f \|_{L^2 (\cx)} \leqslant \frac{1}{c^- (\epsilon) \sqrt[m + 1]{|T|}} \| D^- f \|_{L^2 (\cx)} = \frac{1}{c^- (\epsilon) \sqrt[m + 1]{|T|}} \| \overline{\alpha} f \|_{L^2 (\cx)} \leqslant \frac{\| \alpha \|_{L^\infty (\cx)}}{c^- (\epsilon) \sqrt[m + 1]{|T|}} \| f \|_{L^2 (\cx)}.
	\end{equation}
	Thus, if $\| \alpha \|_{L^\infty (\cx)} < c^- (\epsilon) \sqrt[m + 1]{|T|}$, then $f = 0$. The claim about the exponential decay of the solutions can be proven as in the proof of \Cref{theorem:model_analysis}, as $\alpha$ is compactly supported.
\end{proof}

\bigskip

\section{Generalized Jackiw--Rossi theory on complete surfaces}
\label{sec:JR_on_surfaces}

In this section, we use the results of \Cref{sec:model} to classify and construct solutions to Jackiw--Rossi \cref{eq:JR_eq}.

Let $X$ be a oriented, complete Riemannian surface, and let $\S_\cL$ be a spinor bundle corresponding to a spin$^{\mathrm{c}}$ structure of $X$ with associated line bundle $\cL^2$. Let $\nabla$ be a compatible connection on $\S_\cL$, and $\Phi$ be a smooth section of $\cF^+ = \cL^{- 2}$. Thus, we can define the Jackiw--Rossi operator $H_{\nabla, \Phi} = \D_\nabla + \cA_\Phi$ as before. Recall that in dimension two, every connection induces a holomorphic structure. We use the usual notations, $\del_\nabla \eqdef \nabla^{1, 0}$ and $\delbar_\nabla \eqdef \nabla^{0, 1}$. In particular, in the splitting $\S_\cL = \S_\cL^+ \oplus \S_\cL^-$ where $\S_\cL^\pm = \cL \Theta^{\pm 1}$ with $\Theta^2 = K_X$, we have
\begin{equation}
	\D_\nabla = \begin{pmatrix} 0 & \delbar_\nabla^* \\ \delbar_\nabla & 0 \end{pmatrix}.
\end{equation}

Let $\cZ_\Phi \eqdef \left\{ \: x \in X \: \middle| \: \Phi (x) = 0 \: \right\}$ be the zero locus of $\Phi$. Assume for the rest of the section the following:
\begin{enumerate}
	\item[\textbf{A1:}] The curvature of $\nabla$ is bounded.
	\item[\textbf{A2:}] $\nabla \Phi$ is bounded.
	\item[\textbf{A3:}] $\cZ_\Phi$ is finite and, thus, the number
	\begin{equation}
		\delta_\Phi \eqdef \tfrac{1}{4} \inf \( \left\{ \: \dist (x, y) \: \middle| \: x, y \in \cZ_\Phi \: \& \: x \neq y \: \right\} \), \label{eq:delta}
	\end{equation}
	is positive.
	\item[\textbf{A4:}] The number
	\begin{equation}
		c_\Phi \eqdef \inf \( \left\{ \: |\Phi (x)| \: \middle| \: x \in X \: \& \: \dist (x, \cZ_\Phi) > \delta_\Phi \: \right\} \), \label{eq:def_c_Phi}
	\end{equation}
	is positive\footnote{When $X$ is compact, then \cref{eq:def_c_Phi} is implied by Assumption \textbf{A2}, but when $X$ is noncompact, then it excludes, for example, the cases when $\Phi$ decays at ``infinity''.}.
	\item[\textbf{A5:}] For all $x \in \cZ_\Phi$, there is a homogeneous, complex polynomial on $\cx$, $\phi_x$, of degree $m (x) \in \Z_+$ with an isolated zero at the origin of index $k (x) \in \Z$, a local, holomorphic, normal chart $z$, and a local, normal trivialization of $\cL$, $\nu_x$, such that for all $y \in X$ close enough points we have
	\begin{subequations}
	\begin{align}
		\Phi (y)		&= \phi_x (z (y), \overline{z (y)}) \nu_x (y)^\flat \otimes \nu_x (y)^\flat + O \( |z (y)|^{m (x) + 1} \), \label{eq:Phi_hypoth} \\
		\nabla \Phi (y)	&= O \( |z (y)|^{m (x) - 1} \). \label{eq:nabla_Phi_hypoth}
	\end{align}
	\end{subequations}
	In other words, the leading order term of $\Phi$ at any of its zero is of the form that we studied in \Cref{sec:model}.
\end{enumerate}

As in \Cref{sec:model}, let us define the (real) Hilbert spaces $\cH_{\nabla, \Phi}^\pm$ as the norm-completions of the vector space of smooth, compactly supported sections of $\S_\cL^\pm$, with respect to the $L^2$-graph norm of $H_{\nabla, \Phi}^\pm$, that is:
\begin{equation}
	\forall \Psi \in C_{\mathrm{cpt}}^\infty \( \S_\cL^\pm \) : \: \| \Psi \|_{\cH_{\nabla, \Phi}^\pm} = \( \| H_{\nabla, \Phi} \Psi \|_{L^2 \( \S_\cL \)}^2 + \| \Psi \|_{L^2 \( \S_\cL \)}^2 \)^{\frac{1}{2}},
\end{equation}
and let $\cH_{\nabla, \Phi} \eqdef \cH_{\nabla, \Phi}^+ \oplus \cH_{\nabla, \Phi}^-$.

The next two lemmata are technical results that are necessary to prove our main theorem.

\smallskip

\begin{lemma}
\label{lemma:regularity}
	Under Assumptions \textbf{A1} through \textbf{A5}, we have the following:
	\begin{enumerate}
		\item $\cH_{\nabla, \Phi} \subseteq L_1^2 \( \S_\cL \)$ and the inclusion is a continuous embedding. If $\Phi$ is bounded, then $\cH_{\nabla, \Phi} = L_1^2 \( \S_\cL \)$.
		\item Considered as a densely defined operator on $L^2 \( \S_\cL \)$, $H_{\nabla, \Phi}$ (with domain being $\cH_{\nabla, \Phi}$) is self-adjoint.
	\end{enumerate}
\end{lemma}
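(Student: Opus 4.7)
The plan splits by the two parts of the lemma. For (1), I will derive a Weitzenb\"ock-type identity for $H_{\nabla,\Phi}$ valid on smooth compactly supported sections:
\begin{equation*}
\|H_{\nabla,\Phi}\Psi\|_{L^2}^2 \;=\; \|\nabla\Psi\|_{L^2}^2 \;+\; \|\cA_\Phi\Psi\|_{L^2}^2 \;+\; \langle \Psi, \cR\,\Psi\rangle_{L^2},
\end{equation*}
where $\cR$ is a pointwise bundle endomorphism uniformly bounded in terms of the scalar curvature of $g$, the curvature of $\nabla$, and $\nabla\Phi$. I obtain this by expanding
\begin{equation*}
\|H_{\nabla,\Phi}\Psi\|_{L^2}^2 \;=\; \|\D_\nabla\Psi\|_{L^2}^2 + \|\cA_\Phi\Psi\|_{L^2}^2 + 2\,\Re\langle \D_\nabla\Psi, \cA_\Phi\Psi\rangle_{L^2},
\end{equation*}
applying the Lichnerowicz formula to the first summand and invoking \Cref{lemma:Concentrating_Pair} for the cross term; the latter reduces to the algebraic operator $\cl(\cA_{\nabla\Phi})$ because $\Phi$ is a section of $\cF^+$ and the sign hypothesis of \Cref{theorem:Manos} is met in dimension two. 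Assumptions \textbf{A1} and \textbf{A2} then give $\|\cR\|_{L^\infty}<\infty$. Rearranging and discarding the non-negative $\|\cA_\Phi\Psi\|^2$ yields a uniform bound $\|\nabla\Psi\|_{L^2}^2 \leq \|H_{\nabla,\Phi}\Psi\|_{L^2}^2 + C\|\Psi\|_{L^2}^2$, and extending by density gives a continuous embedding $\cH_{\nabla,\Phi} \hookrightarrow L_1^2(\S_\cL)$. If in addition $\Phi$ is bounded, the triangle inequality $\|H_{\nabla,\Phi}\Psi\|_{L^2} \leq \|\D_\nabla\Psi\|_{L^2} + \|\Phi\|_{L^\infty}\|\Psi\|_{L^2}$ gives the reverse embedding, so the two spaces coincide.

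For (2), note that by construction $\cH_{\nabla,\Phi}$ is the graph-norm closure of $C_{\mathrm{cpt}}^\infty(\S_\cL)$, so $(H_{\nabla,\Phi}, \cH_{\nabla,\Phi})$ is the closure of $H_{\nabla,\Phi}|_{C_{\mathrm{cpt}}^\infty}$. Thus its self-adjointness is equivalent to essential self-adjointness of the restriction. Because $\cA_\Phi$ is conjugate linear, I regard $L^2(\S_\cL)$ as a real Hilbert space under $\Re\langle\cdot,\cdot\rangle$; in this real structure $H_{\nabla,\Phi}$ is a real-linear, formally symmetric, first-order elliptic operator whose principal symbol is $\cl(\xi)$, with pointwise operator norm exactly $|\xi|_g$. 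Since $(X,g)$ is complete, Chernoff's essential self-adjointness theorem for first-order symmetric elliptic systems with bounded propagation speed (applied in the real setting) delivers essential self-adjointness of $H_{\nabla,\Phi}|_{C_{\mathrm{cpt}}^\infty}$.

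The main obstacle is justifying the use of Chernoff's theorem with an algebraic perturbation $\cA_\Phi$ that need not be bounded (under assumptions \textbf{A1}--\textbf{A5} alone, $\Phi$ may grow along the ends of $X$) and in the real-linear category. The unboundedness issue is resolved by observing that Chernoff's argument only uses the principal symbol to control the propagation of the associated hyperbolic equation, so a zeroth-order perturbation does not affect the proof once formal symmetry (under $\Re\langle\cdot,\cdot\rangle$) of $H_{\nabla,\Phi}$ has been verified from $\D_\nabla^*=\D_\nabla$ and the identity $\cA_\Phi^* = \s_\Phi\cA_\Phi$ recorded after \cref{eq:A_Phi}.
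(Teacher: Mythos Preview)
For Part~(1) your argument is essentially the paper's own: both derive the Weitzenb\"ock identity
\[
\|H_{\nabla,\Phi}\Psi\|_{L^2}^2 \;=\; \|\nabla\Psi\|_{L^2}^2 + \|\cA_\Phi\Psi\|_{L^2}^2 + \langle\Psi,\cl(F_\nabla)\Psi\rangle_{L^2} + \langle\Psi,\cl(\cA_{\nabla\Phi})\Psi\rangle_{L^2},
\]
bound the last two terms using \textbf{A1} and \textbf{A2}, and read off the continuous embedding $\cH_{\nabla,\Phi}\hookrightarrow L_1^2(\S_\cL)$; the reverse inequality when $\Phi$ is bounded is handled identically. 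Your separate mention of the scalar curvature is harmless: assumption \textbf{A1} bounds the full curvature of the spinor connection, which subsumes the Riemannian contribution.

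For Part~(2) you take a genuinely different route. The paper invokes a Gelfand-triple argument: with $V=\cH_{\nabla,\Phi}$, $H=L^2(\S_\cL)$, and the bilinear form $B(\Psi_1,\Psi_2)=\langle\Psi_1,H_{\nabla,\Phi}\Psi_2\rangle_{L^2}$, it verifies the hypotheses of a Lax--Milgram-type self-adjointness theorem, the required coercivity coming directly from the Weitzenb\"ock estimate already established in Part~(1). You instead appeal to Chernoff's finite-propagation-speed criterion for essential self-adjointness of symmetric first-order systems on complete manifolds. Both approaches are valid. Yours is more geometric and does not reuse the Weitzenb\"ock identity, at the cost of the two technical points you correctly flag: the real-linear setting (handled by complexifying the bundle and operator, or by running Chernoff's energy argument verbatim in the real inner product) and the possibly unbounded zeroth-order term $\cA_\Phi$ (irrelevant to Chernoff's proof, since finite propagation speed depends only on the principal symbol $\cl(\xi)$, and local solvability of the associated hyperbolic system needs only smooth coefficients). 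The paper's approach has the virtue of recycling Part~(1) rather than importing hyperbolic PDE machinery; yours has the virtue of being the standard argument for Dirac-type operators on complete manifolds and of making clear that boundedness of $\Phi$ plays no role in self-adjointness.
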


\begin{proof}
	Simple computation yields the following Weitzenb\"ock identity:
	\begin{equation}
		\( H_{\nabla, \Phi} \)^* H_{\nabla, \Phi} \Psi = \nabla^* \nabla \Psi + \cl \( F_\nabla \) \Psi + \cl \( \cA_{\nabla \Phi} \) \Psi + |\Phi|^2 \Psi. \label{eq:weitz}
	\end{equation}
	Thus, if $\Psi$ is a smooth and compactly supported section on $\S_\cL$, then we have
	\begin{align}
		\| \nabla \Psi \|_{L^2 \( T^* X \otimes \S_\cL \)}^2	&= \| H_{\nabla, \Phi} \Psi \|_{L^2 \( \S_\cL \)}^2 - \langle \Psi | \cl \( F_\nabla \) \Psi \rangle_{L^2 \( \S_\cL \)} - \langle \Psi | \cl \( \cA_{\nabla \Phi} \) \Psi \rangle_{L^2 \( \S_\cL \)} - \| \cA_\Phi \Psi \|_{L^2 \( \S_\cL \)}^2 \\
														&\leqslant \| H_{\nabla, \Phi} \Psi \|_{L^2 \( \S_\cL \)}^2 + 2 \( \| \nabla \Phi \|_{L^\infty} + \| F_\nabla \|_{L^\infty} \) \| \Psi \|_{L^2 \( \S_\cL \)}^2.
	\end{align}
	Using Assumptions \textbf{A1} and \textbf{A2}, we get $\| \Psi \|_{L_1^2 \( \S_\cL \)} \leqslant C \( \nabla, \Phi \) \| \Psi \|_{\cH_{\nabla, \Phi}}$. Furthermore, if $\Phi$ is bounded, then the above equations also gives $\| \Psi \|_{\cH_{\nabla, \Phi}} \leqslant C^\prime \( \nabla, \Phi \) \| \Psi \|_{L_1^2 \( \S_\cL \)}$. This proves the first claim.

	As $H_{\nabla, \Phi}$ is symmetric when restricted to smooth, compactly supported sections, it has a self-adjoint extension, which is necessarily closed, but then this extension must be $H_{\nabla, \Phi}$.
\end{proof}

\smallskip

\begin{lemma}
\label{lemma:spectrum}
	Under the Assumptions \textbf{A1} through \textbf{A5}, for all $\cE_0 > 0$, there is a positive number $T$, only dependent on $(X, g, \cL, \nabla, \Phi, \cE_0)$, such that for all $t > T$, the intersection $\left[ - \cE_0, \cE_0 \right] \cap \Spec \( H_{\nabla, t \Phi} \)$ contains only eigenvalues.

	Furthermore, if $\lambda \in \left[ - \cE_0, \cE_0 \right] \cap \Spec \( H_{\nabla, t \Phi} \)$, then $\ker \( H_{\nabla, t \Phi} - \lambda \id_{L^2 \( \S_\cL \)} \)$ is a finite dimensional subspace of $C^\infty \( \S_\cL \) \cap L_2^2 \( \S_\cL \)$.
\end{lemma}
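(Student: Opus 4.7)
The plan is to push the essential spectrum of $H_{\nabla, t \Phi}$ out to infinity as $t \to \infty$ via a Persson-type argument driven by the Weitzenb\"ock identity from the proof of \Cref{lemma:regularity}. Rescaling $\Phi \mapsto t \Phi$ in \cref{eq:weitz} (using that $\cA_\Phi$ is linear in $\Phi$) gives
\begin{equation}
	H_{\nabla, t \Phi}^2 = \nabla^* \nabla + \cl(F_\nabla) + t \, \cl(\cA_{\nabla \Phi}) + t^2 |\Phi|^2,
\end{equation}
so the positive zero-order term $t^2 |\Phi|^2$ is expected to dominate wherever $|\Phi|$ is bounded below.

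Concretely, I would fix a compact exhaustion $\{K_n\}_{n \in \N}$ of $X$ with each $K_n$ containing the $\delta_\Phi$-neighborhood of the finite set $\cZ_\Phi$ (compact by \textbf{A3}). Assumption \textbf{A4} provides $c = c(\Phi) > 0$ with $|\Phi|^2 \geqslant c^2$ on $X \setminus K_n$ for $n$ large, and \textbf{A1}, \textbf{A2} give $L^\infty$-bounds on $F_\nabla$ and $\cl(\cA_{\nabla \Phi})$. For any $\Psi \in C_{\mathrm{cpt}}^\infty(X \setminus K_n)$ one then obtains
\begin{equation}
	\langle \Psi, H_{\nabla, t \Phi}^2 \Psi \rangle_{L^2} \geqslant \big( t^2 c^2 - C_1 t - C_2 \big) \| \Psi \|_{L^2}^2,
\end{equation}
with $C_1, C_2$ independent of $n$ and $t$. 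Persson's principle, applied to the non-negative self-adjoint operator $H_{\nabla, t \Phi}^2$ (self-adjoint by \Cref{lemma:regularity}), then forces $\inf \Spec_{\mathrm{ess}}(H_{\nabla, t \Phi}^2) \geqslant t^2 c^2 - C_1 t - C_2$. Choosing $T = T(X, g, \cL, \nabla, \Phi, \cE_0)$ so that the right-hand side exceeds $\cE_0^2$ yields the first claim: for $t > T$ the intersection $[-\cE_0, \cE_0] \cap \Spec(H_{\nabla, t \Phi})$ contains no essential spectrum and therefore consists of isolated eigenvalues of finite multiplicity.

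For the regularity statement, the operator $H_{\nabla, t \Phi} - \lambda \id$ is first-order elliptic (its principal symbol coincides with that of $\D_\nabla$), so standard elliptic regularity places every element of $\ker(H_{\nabla, t \Phi} - \lambda \id)$ in $C^\infty(\S_\cL)$; the Weitzenb\"ock identity combined with $H_{\nabla, t \Phi}^2 \Psi = \lambda^2 \Psi$ then shows $\nabla^* \nabla \Psi \in L^2$, and elliptic regularity for the Laplace-type operator $\nabla^* \nabla$ promotes $\Psi$ to $L_2^2(\S_\cL)$. The main obstacle is the Persson step: it is precisely assumption \textbf{A4} that prevents $\Phi$ from decaying at infinity and thereby leaking essential spectrum into $[-\cE_0, \cE_0]$; without it the conclusion would genuinely fail on noncompact $X$.
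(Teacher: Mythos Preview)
Your approach is correct and shares the same analytic core as the paper's: both exploit the Weitzenb\"ock identity \cref{eq:weitz} together with assumption \textbf{A4} to show that the zero-order term $t^2|\Phi|^2$ dominates outside a fixed compact set, which pushes the essential spectrum beyond any prescribed $\cE_0$ for $t$ large. The technical packaging differs, however. Rather than invoking Persson's theorem for $H_{\nabla,t\Phi}^2$, the paper works directly with $H_{\nabla,t\Phi}-\lambda$: it introduces the multiplication operator $K\Psi \eqdef \chi\Psi$ with $\chi^2 = \max(1-|\Phi|^2,0)$ (compactly supported by \textbf{A4}, hence compact as a map $\cH_{\nabla,t\Phi}\to L^2$), derives from the Weitzenb\"ock identity an estimate of the form $\|\Psi\|_{\cH_{\nabla,t\Phi}} \leqslant c\big(\|(H_{\nabla,t\Phi}-\lambda)\Psi\|_{L^2} + \|K\Psi\|_{L^2}\big)$ for $|\lambda|\leqslant\cE_0$ and $t>T$, and then appeals to the abstract closed-range lemma \cite{BS18}*{Lemma~4.3.9} to conclude that $H_{\nabla,t\Phi}-\lambda$ has closed range and finite-dimensional kernel; self-adjointness then forces $\lambda$ to be either an eigenvalue or a regular value. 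Your Persson route is a bit more conceptual and gives the same conclusion, but it does require knowing that Persson's principle holds for Laplace-type operators on sections of Hermitian bundles over complete manifolds (it does, via the usual IMS localization argument); the paper's argument is self-contained modulo the cited Fredholm lemma. Your regularity paragraph matches the paper's essentially verbatim.
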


\begin{proof}
	Let $\chi$ be the nonnegative function that satisfies
	\begin{equation}
		\chi^2 = \max \( \left\{ \: c_\Phi^2 - |\Phi|^2, 0 \: \right\} \),
	\end{equation}
	where $c_\Phi > 0$ is as in \cref{eq:def_c_Phi}. By Assumptions \textbf{A3} and \textbf{A4}, $\chi$ is compactly supported, $c_\Phi^2 - \chi^2 \leqslant |\Phi|^2$, and, thus, the map
	\begin{equation}
		K : \cH_{\nabla, t \Phi} \rightarrow L^2 \( \S_\cL \); \: \Psi \mapsto \chi \Psi, 
	\end{equation}
	is compact. Then for all $\Psi \in \cH_{\nabla, t \Phi}$ we have
	\begin{align}
		c_\Phi^2 \| \Psi \|_{L^2 \( \S_\cL \)}^2	&= \int\limits_X \( c_\Phi^2 - \chi^2 \) |\Psi|^2 \vol_X + \int\limits_X \chi^2 |\Psi|^2 \vol_X \\
			&\leqslant \| \Phi \Psi \|_{L^2 \( \cL^{- 2} \S_\cL \)}^2 + \| \chi \Psi \|_{L^2 \( \S_\cL \)}^2 \\
			&= \| \cA_\Phi \Psi \|_{L^2 \( \S_\cL \)}^2 + \| K \Psi \|_{L^2 \( \S_\cL \)}^2.
	\end{align}
	Using the Weitzenb\"ock identity from the proof of \Cref{lemma:regularity}, we get for any $\lambda \in \rl$ that
	\begin{align}
		t^2 \| \cA_\Phi \Psi \|_{L^2 \( \S_\cL \)}^2	&\leqslant \| H_{\nabla, t \Phi} \Psi \|_{L^2 \( \S_\cL \)}^2 + \( 2 t \| \nabla \Phi \|_{L^\infty} + 2 \| F_\nabla \|_{L^\infty} \) \| \Psi \|_{L^2 \( \S_\cL \)}^2 \\
		&= \| \( H_{\nabla, t \Phi} - \lambda \id_{L^2 \( \S_\cL \)} \) \Psi \|_{L^2 \( \S_\cL \)}^2 + 2 \lambda \langle \Psi | \( H_{\nabla, t \Phi} - \lambda \id_{L^2 \( \S_\cL \)} \) \Psi \rangle_{L^2 \( \S_\cL \)} \\
		& \quad + \( 2 t \| \nabla \Phi \|_{L^\infty} + 2 \| F_\nabla \|_{L^\infty} + \lambda^2 \) \| \Psi \|_{L^2 \( \S_\cL \)}^2 \\
		&\leqslant \| \( H_{\nabla, t \Phi} - \lambda \id_{L^2 \( \S_\cL \)} \) \Psi \|_{L^2 \( \S_\cL \)}^2 + 2 \lambda \| \( H_{\nabla, t \Phi} - \lambda \id_{L^2 \( \S_\cL \)} \) \Psi \|_{L^2 \( \S_\cL \)} \| \Psi \|_{L^2 \( \S_\cL \)} \\
		& \quad + \( 2 t \| \nabla \Phi \|_{L^\infty} + 2 \| F_\nabla \|_{L^\infty} + \lambda^2 \) \| \Psi \|_{L^2 \( \S_\cL \)}^2 \\
		&\leqslant \( \| \( H_{\nabla, t \Phi} - \lambda \id_{L^2 \( \S_\cL \)} \) \Psi \|_{L^2 \( \S_\cL \)} + \sqrt{2 t \| \nabla \Phi \|_{L^\infty} + 2 \| F_\nabla \|_{L^\infty} + \lambda^2} \| \Psi \|_{L^2 \( \S_\cL \)} \)^2.
	\end{align}
	Thus, we have
	\begin{equation}
		\| \Psi \|_{\cH_{\nabla, t \Phi}} \leqslant \tfrac{1}{c_\Phi t} \| \( H_{\nabla, t \Phi} - \lambda \id_{L^2 \( \S_\cL \)} \) \Psi \|_{L^2 \( \S_\cL \)} + \tfrac{1}{c_\Phi} \| K \Psi \|_{L^2 \( \S_\cL \)} + \tfrac{\sqrt{2 t \| \nabla \Phi \|_{L^\infty} + 2 \| F_\nabla \|_{L^\infty} + \lambda^2}}{c_\Phi t} \| \Psi \|_{L^2 \( \S_\cL \)}.
	\end{equation}
	Thus, if $|\lambda| \leqslant \cE_0$ and $t > T = O \( \tfrac{1}{c_\Phi^2} \sqrt{\| \nabla \Phi \|_{L^\infty}^2 + c_\Phi^2 \| F_\nabla \|_{L^\infty} + c_\Phi^2 \cE_0^2} \)$, then the last term can be absorbed on the left, and we get
	\begin{equation}
		\| \Psi \|_{\cH_{\nabla, t \Phi}} \leqslant \tfrac{2}{c_\Phi} \( \tfrac{1}{t} \| \( H_{\nabla, t \Phi} - \lambda \id_{L^2 \( \S_\cL \)} \) \Psi \|_{L^2 \( \S_\cL \)} + \| K \Psi \|_{L^2 \( \S_\cL \)} \).
	\end{equation}
	Using \cite{BS18}*{Lemma 4.3.9} with $X = \cH_{\nabla, t \Phi}$, $Y = Z = L^2 \( \S_\cL \)$, and $A = H_{\nabla, t \Phi}$, this implies that $H_{\nabla, t \Phi} - \lambda \id_{L^2 \( \S_\cL \)}$ closed image and finite dimensional kernel.

	Now if $|\lambda| \leqslant \cE_0$ and $t > T$, then $H_{\nabla, t \Phi} - \lambda \id_{L^2 \( \S_\cL \)}$ is self-adjoint and has closed image. Thus, $\lambda$ is either an eigenvalue of $H_{\nabla, t \Phi}$ or the image $H_{\nabla, t \Phi} - \lambda \id_{L^2 \( \S_\cL \)}$ is dense and, thus, all of $L^2 \( \S_\cL \)$, in which case $\lambda$ is not in the spectrum. This proves that $\lambda \in \left[ - \cE_0, \cE_0 \right] \cap \Spec \( H_{\nabla, t \Phi} \)$ contains only eigenvalues.

	Since $\( \nabla, \Phi \)$ is a smooth pair and $H_{\nabla, \Phi}$ is elliptic, each $\Psi \in \ker \( H_{\nabla, t \Phi} - \lambda \id_{L^2 \( \S_\cL \)} \)$ is smooth. Furthermore, using \cref{eq:weitz} we get that $|\nabla^* \nabla \Psi| \leqslant C (\lambda, t, \nabla, \Phi) |\Psi|$ and, thus, $\Psi \in L_2^2 \( \S_\cL \)$. This concludes the proof.
\end{proof}

\smallskip

\begin{corollary}
	The operators $H_{\nabla, \Phi}^\pm : \cH_{\nabla, t \Phi}^\pm \rightarrow L^2 \( \S_\cL^\mp \)$ are Fredholm and
	\begin{equation}
		\index_\rl \( H_{\nabla, \Phi}^\pm \) = - \index_\rl \( H_{\nabla, \Phi}^\mp \). \label{eq:opposite_index}
	\end{equation}
\end{corollary}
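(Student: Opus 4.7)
The plan is to deduce the corollary from the Fredholmness machinery already assembled in \Cref{lemma:regularity,lemma:spectrum}, combined with the block off-diagonal structure of the Jackiw--Rossi operator in the chirality grading $\S_\cL = \S_\cL^+ \oplus \S_\cL^-$.

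First I would record the structural observation that both $\D_\nabla$ and $\cA_\Phi$ exchange the two chirality components: for $\D_\nabla$ this is the splitting already used in this section, and for $\cA_\Phi$ it follows from the explicit local formula given in the Jackiw--Rossi example of \Cref{sec:JR_on_manifolds}. So the full operator $H_{\nabla, t\Phi}$ is block off-diagonal with blocks $H^\pm \colon \S_\cL^\pm \to \S_\cL^\mp$, and self-adjointness from \Cref{lemma:regularity}, taken with respect to the real inner product $\Re \, h^\S$, identifies $H^- = (H^+)^*$.

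Next, I would invoke \Cref{lemma:spectrum} with $\lambda = 0$ and any fixed $\cE_0 > 0$ to obtain a threshold $T$ beyond which $H_{\nabla, t\Phi}$ has closed image and finite-dimensional kernel; together with self-adjointness this yields $\coker(H_{\nabla, t\Phi}) \cong \ker(H_{\nabla, t\Phi})$ finite-dimensional, so $H_{\nabla, t\Phi}$ itself is Fredholm. Because the chirality decomposition is orthogonal in the real Hilbert structure (the real part of the Hermitian pairing of sections of $\S_\cL^+$ and $\S_\cL^-$ vanishes pointwise), both the kernel and the image split along the grading, and consequently each block $H^\pm$ has finite-dimensional kernel and closed image. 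The adjoint identification then gives $\coker(H^\pm) \cong \ker((H^\pm)^*) = \ker(H^\mp)$, which is finite-dimensional; hence both $H^\pm$ are Fredholm, and
\[
    \index_\rl(H^+) = \dim_\rl \ker(H^+) - \dim_\rl \ker(H^-) = -\index_\rl(H^-),
\]
which is precisely \eqref{eq:opposite_index}.

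The only delicate point I anticipate is the real-linear bookkeeping: since $\cA_\Phi$ is conjugate-linear, ``adjoint'' must be understood in terms of the real Hilbert inner product rather than the Hermitian one, and Fredholm theory must be applied in the category of real Banach spaces throughout. Once this is set up consistently, the corollary becomes an essentially formal consequence of the two preceding lemmas together with the off-diagonal block structure.
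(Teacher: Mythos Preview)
Your proposal is correct and follows essentially the same route as the paper: invoke \Cref{lemma:spectrum} at $\lambda=0$ to obtain closed image and finite-dimensional kernel, use the self-adjointness from \Cref{lemma:regularity} to conclude that the full operator is Fredholm of index zero, and then use that $H_{\nabla,t\Phi}$ is odd with respect to the parity grading to split this into the two blocks with opposite indices. Your version is more explicit about the real-linear bookkeeping and the orthogonality of the chirality decomposition, but the underlying argument is the same.
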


\begin{proof}
	By \Cref{lemma:spectrum}, for large $t$, $H_{\nabla, t \Phi}: \cH_{\nabla, t \Phi} \rightarrow L^2 \( \S_\cL \)$ has finite dimensional kernel and closed image. Since as a densely defined map on $L^2 \( \S_\cL \)$, $H_{\nabla, \Phi}$ is self-adjoint, its index is zero. Finally, as $H_{\nabla, \Phi}$ is odd, with respect to the parity grading, we get \cref{eq:opposite_index}.
\end{proof}

\smallskip

For the rest of the paper, let
\begin{align}
	\tau_x (t)	&\eqdef \sqrt[m (x) + 1]{t}, \\
	M			&\eqdef \max \( \left\{ \: m (x) \: \middle| \: x \in \cZ_\Phi \: \right\} \), \\
	\tau (t)	&\eqdef \sqrt[M + 1]{t} = \min \( \left\{ \: \tau_x (t) \: \middle| \: x \in \cZ_\Phi \: \right\} \).
\end{align}
We are now ready to state our main theorem.

\begin{theorem}\label{theorem:general}
	Under Assumptions \textbf{A1} through \textbf{A5}, there are positive numbers, $\cE$ and $T$, both dependent only on $(X, g, \cL, \nabla, \Phi)$, such that for all $t > T$, there are subspaces $\cK_t \subseteq \cH_{\nabla, t \Phi}$ with the following properties:
	\begin{enumerate}
		\item $\cK_t$ and $\cK_t^\perp$ are invariant subspaces of $H_{\nabla, t \Phi}$ and the parity operator $\P$.
		\item The dimension of $\cK_t$ is given by
		\begin{equation}
			\dim_\rl \( \cK_t \) = \sum\limits_{x \in \cZ_\Phi} \left| k (x) \right| < \infty. \label{eq:dim_K_t}
		\end{equation}
		\item For all $\Psi \in \cK_t^\perp$, we have that
		\begin{equation}
			\| H_{\nabla, t \Phi} \Psi \|_{L^2 \( \S_\cL^\mp \)} \geqslant \tfrac{\tau (t)}{\cE} \| \Psi \|_{L^2 \( \S_\cL \)}. \label[ineq]{ineq:high_EVAs}
		\end{equation}
		In particular, $\ker \( H_{\nabla, t \Phi} \) \subseteq \cK_t$.
		\item For all $\Psi \in \cK_t$, we have that
		\begin{equation}
			\| H_{\nabla, t \Phi} \Psi \|_{L^2 \( \S_\cL^\mp \)} \leqslant \cE \| \Psi \|_{L^2 \( \S_\cL \)}. \label[ineq]{ineq:low_EVAs}
		\end{equation}
		In particular, $\ker \( H_{\nabla, t \Phi} \) \subseteq \cK_t$.
	\end{enumerate}
	Furthermore, we have that for all $t \in \rl_+$
	\begin{equation}
		\index_\rl \( H_{\nabla, t \Phi}^\pm \) = \pm \smashoperator{\sum\limits_{x \in \cZ_\Phi}} k (x). \label{eq:index}
	\end{equation}
\end{theorem}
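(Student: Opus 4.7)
The plan is to identify $\cK_t$ with the spectral subspace of $H_{\nabla, t\Phi}$ for eigenvalues of order $O(\tau(t)^{-1})$, count its dimension by localizing near $\cZ_\Phi$ and reducing to the model case of \Cref{theorem:model_analysis}, and then extract the index from the chiral splitting. The two tools that link the global and model settings are the concentrating-pair estimate \Cref{theorem:Manos} and the local normal form \cref{eq:Phi_hypoth}, with \Cref{lemma:conformal} used to absorb the rescaling by $\tau_x(t)$.

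For the lower bound $\dim \cK_t \geq \sum_x |k(x)|$, I would construct approximate kernel elements directly. Near each $x \in \cZ_\Phi$, use the chart $z$ and trivialization $\nu_x$ from assumption \textbf{A5} and rescale by $w = \tau_x(t) z$. Combined with \Cref{lemma:conformal}, this transforms the local restriction of $H_{\nabla, t\Phi}$ into $D^\pm + \text{error}$, where $D^\pm$ are the model operators associated to the homogeneous polynomial $\phi_x$ and the error tends to zero on compact $w$-sets as $t \to \infty$ (using \cref{eq:nabla_Phi_hypoth} for the derivative of the error). \Cref{theorem:model_analysis} produces a real $|k(x)|$-dimensional kernel with exponential decay, splitting chirally as $\max(k(x), 0) \oplus \max(-k(x), 0)$. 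Cutting each such section off by a bump supported in $B_{\delta_\Phi}(x)$ and pulling it back to $\S_\cL$ yields a section $\Psi_x$ with $\|H_{\nabla, t\Phi} \Psi_x\|_{L^2} \leq C \tau(t)^{-1} \|\Psi_x\|_{L^2}$, the cutoff error being absorbed by exponential decay at the rescaled scale. The span of these $\Psi_x$, via the min-max principle, gives at least $\sum |k(x)|$ eigenvalues of $H_{\nabla, t\Phi}$ of magnitude $\leq C\tau(t)^{-1}$.

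For the upper bound and the gap, let $\Psi$ be a unit-norm eigenspinor with small eigenvalue $|\lambda| \leq \cE \tau(t)$. Since $\Phi$ is a section of $\cF^+$ on a Riemannian surface, the parity hypothesis of \Cref{theorem:Manos} holds, so that theorem yields $\|\Psi\|_{L^2(X \setminus \bigcup_x B_{\delta_\Phi/2}(x))} = O(t^{-l})$ for every $l$. Restricting $\Psi$ to a neighborhood of each $x$, rescaling by $\tau_x(t)$, and passing to a weak limit produces an element of $\ker(D_x^+) \oplus \ker(D_x^-)$; the uniform Fredholm gap \cref{ineq:gap} for the model operators ensures injectivity of this map on $\cK_t$ for $t$ large, so $\dim \cK_t \leq \sum_x |k(x)|$, with equality from the lower-bound construction. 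The bounds \cref{ineq:low_EVAs} and \cref{ineq:high_EVAs} then follow from the spectral-subspace definition combined with the discreteness and self-adjointness from \Cref{lemma:spectrum}; parity invariance of $\cK_t$ is automatic from $\P H_{\nabla, t\Phi} \P = -H_{\nabla, t\Phi}$. Splitting $\cK_t = \cK_t^+ \oplus \cK_t^-$ by chirality, the local analysis gives $\dim \cK_t^\pm = \sum_x \max(\pm k(x), 0)$; since $H_{\nabla, t\Phi}^\pm$ restricts to an isomorphism on $\cK_t^\perp$ by the gap \cref{ineq:low_EVAs}, we obtain $\index_\rl(H_{\nabla, t\Phi}^\pm) = \dim \cK_t^\pm - \dim \cK_t^\mp = \pm \sum_x k(x)$, and homotopy invariance of the Fredholm index (Fredholmness for all $t > 0$ being given by the preceding corollary) extends the formula from large $t$ to all $t \in \rl_+$.

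The main obstacle is the injectivity step in the upper bound: one must upgrade the conclusion of \Cref{theorem:Manos} from ``$\Psi$ concentrates near $\cZ_\Phi$'' to ``the rescaled $\Psi$ is $L^2$-close to a genuine element of $\ker(D_x^\pm)$,'' rather than merely being bounded in rescaled $L^2$ norm. This hinges on applying the resolvent bound \cref{ineq:gap} to an error term controlled by \cref{eq:Phi_hypoth,eq:nabla_Phi_hypoth} on the rescaled chart and by the Maridakis decay outside. A secondary technical point is that zeros with $m(x) < M$ carry a faster natural scale $\tau_x(t) \geq \tau(t)$, but this only improves the local estimates and the uniform rate $\tau(t)$ continues to govern the global conclusion.
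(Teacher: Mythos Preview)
Your overall architecture matches the paper's: build an approximate low-eigenspace $\cV_t$ from cut-off model solutions, define $\cK_t$ spectrally, compare dimensions, and read off the index. The substantive divergence is in how the gap \cref{ineq:low_EVAs} and the upper bound on $\dim\cK_t$ are obtained.

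The paper does \emph{not} invoke \Cref{theorem:Manos} or any weak-limit mechanism here. Instead it proves the quantitative bound $\|H_{\nabla,t\Phi}\Psi\|\geqslant\cE'\tau(t)\|\Psi\|$ for every $\Psi\in\cV_t^\perp$ directly, via a partition of unity $\{\chi,\chi_x\}_{x\in\cZ_\Phi}$: on $\supp\chi$ the Weitzenb\"ock identity \cref{eq:weitz} together with assumption \textbf{A4} gives a bound of order $t$; on each $\supp\chi_x$ the orthogonality of $\Psi$ to $\cV_t$ forces its local representative to be (up to an exponentially small error) orthogonal to the model kernel, and then the model gap \cref{ineq:gap} yields a bound of order $\tau_x(t)$. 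The commutators $[H_{\nabla,t\Phi},\chi_x]$ contribute $O(1)$ and are absorbed. This route entirely bypasses the obstacle you flag---no passage from concentration to $L^2$-proximity to the model kernel is needed, because the argument never leaves the fixed-$t$ setting and never rescales. Note also that \Cref{theorem:Manos} as stated requires $X$ closed, whereas \Cref{theorem:general} is for complete, possibly noncompact, surfaces; your route would therefore also need an extension of the Maridakis estimate.

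For the index the paper again stays with $\cV_t$: it replaces $H_{\nabla,t\Phi}^\pm$ by the finite-rank perturbation $(\id-\Pi_t^\mp)\circ H_{\nabla,t\Phi}^\pm\circ(\id-\Pi_t^\pm)$, whose kernel and cokernel are manifestly $\cV_t^\pm$ and $\cV_t^\mp$. Your computation via $\dim\cK_t^\pm$ is also valid, but it requires the extra observation that the near-isometry $\Pi_t|_{\cK_t}\colon\cK_t\to\cV_t$ preserves chirality and hence matches the chiral dimensions; the paper avoids ever needing the chiral splitting of $\cK_t$ itself.
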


\smallskip

\begin{proof}
	In order to make the rest of the computations simpler, let us conformally flatten out a neighborhood of $\cZ_\Phi$. Since $X$ is 2-dimensional, this can always be done. Since this is a smooth and compact perturbation of the metric, it is enough to prove the claims for this metric, by \Cref{lemma:conformal} this . By Assumption \textbf{A3}, $\cZ_\Phi$ is finite, so the conformal factor can be chosen to be bounded, and as it is independent of $t$, it is enough to prove the claims in this new metric. Without any loss of generality we can also assume that $X$ is flat on the ball of radius two around $\cZ_\Phi$, and it is a tubular neighborhood.

	We construct a finite dimensional subspace, $\cV_t \leqslant \cH_{\nabla, t \Phi}$ that, for $t$ large enough, satisfies (for some $\cE^\prime > 0$, independent of $t$)
	\begin{subequations}
	\begin{align}
		\dim \( \cV_t \)		&= \sum\limits_{x \in \cZ_\Phi} \left| k (x) \right|, \label{eq:V_t_dim} \\
		\forall \Psi \in \cV_t : \quad \| H_{\nabla, t \Phi} \Psi \|_{L^2 \( \S_\cL^\mp \)} &\leqslant \cE^\prime \| \Psi \|_{L^2 \( \S_\cL \)}, \label[ineq]{ineq:upper_bound} \\
		\forall \Psi \in \cV_t^\perp : \quad \| H_{\nabla, t \Phi} \Psi \|_{L^2 \( \S_\cL^\mp \)} &\geqslant \tfrac{\tau (t)}{\cE^\prime} \| \Psi \|_{L^2 \( \S_\cL \)}. \label[ineq]{ineq:lower_bound}
	\end{align}
	\end{subequations}
	Note that such a subspace $\cV_t$ would almost satisfy the requirements of $\cK_t$, expect it is not necessarily invariant with respect to $H_{\nabla, t \Phi}$ and $\P$. In order to construct $\cV_t$, let us first make a few choices. For each $x \in \cZ_\Phi$, choose a smooth function, $\chi_x$, with values in $[0, 1]$, so that $\supp \( \chi_x \) \subseteq B_2 \( x \)$, $\chi_x|_{B_1 \( x \)} \equiv 1$, and $\| \rd \chi_x \|_{L^\infty \( X \)} \leqslant 2$. Fix $x \in \cZ_\Phi$ and assume, without any loss of generality, that $k (x) \geqslant 0$. Let $\nu_x$ and $z$ as in Assumption \textbf{A5}. Then around $x$, $\alpha_x$ is defined via
	\begin{equation}
		\alpha_x \circ z \eqdef \chi_x h \( \nu_x, \cl^+ \( \nabla \nu_x \) \),
	\end{equation}
	and is extended by zero to all of $\cx$, and $f_t \in L_1^2 (\cx)$ be a solution to
	\begin{equation}
		\frac{\del f_t}{\del \overline{z}} + \alpha_x f_t + t \phi_x \overline{f}_t = 0. \label{eq:model_t}
	\end{equation}
	Note that this is an instance of \cref{eq:model_eq}. Let now $\theta_x$ be a flat, unit length trivialization of $\Theta_X$ over the unit disk around $x$, extended smoothly, but otherwise arbitrarily to the rest of $X$, and let $\Psi_{x, f_t} \eqdef \chi_x \( f_t \circ z \) \theta_x \nu_x \in \Gamma \( \S_\cL \)$, extended as zero to the rest of $X$. Since $f_t$ decays exponentially and $\phi_x$ is homogeneous of degree $m (x)$, we can see (through a simple rescaling argument) that there is a $c > 0$, independent of $f_t$, such that for all $w \in \cx$
	\begin{equation}
		|f_t (w)| \leqslant c \tau_x (t) \exp \( - \tfrac{\tau_x (t)}{c} |w| \) \| f_t \|_{L^2 (\cx)}. \label[ineq]{ineq:f_t_bounds}
	\end{equation}
	Repeating the above construction for all $x \in \cZ_\Phi$ and solutions to \cref{eq:model_t}, we get that
	\begin{equation}
		\left\langle \Psi_{x, f_t} \middle| \Psi_{y, f_t^\prime} \right\rangle_{L^2 \( \S_\cL \)} = \delta_{x, y} \langle f_t | f_t^\prime \rangle_{L^2 (\cx)} \( 1 + O \( \tfrac{1}{\tau_x (t)} \) \).
	\end{equation}
	This implies that in this way we construct a linearly independent, in fact, an asymptotically orthonormal, set of vectors. Thus, the resulting spinors span a (real) subspace, $\cV_t \leqslant \cH_{\nabla, t \Phi}$ satisfying \cref{eq:V_t_dim}.

	Next we prove \cref{ineq:upper_bound}. For for any $\Psi \in \cV_t$, of the form $\Psi \eqdef \Psi_{x, f_t}$, using Assumption \textbf{A5}, we have, that
	\begin{equation}
		H_{\nabla, t \Phi} \Psi = \( \del \chi_x \) \( f_t \circ z \) \theta_x \nu_x + \chi_x \( \frac{\del f_t}{\del \overline{z}} + \alpha_x f_t + t \phi_x \overline{f}_t \) \theta_x \nu_x + O \( t |z|^{m (x) + 1} |f_t| \).
	\end{equation}
	The second term vanishes, by \cref{eq:model_t}. Thus, using \cref{ineq:f_t_bounds} and $|\rd \chi_x| \leqslant 2$, we get
	\begin{align}
		\left| H_{\nabla, t \Phi} \Psi \right|^2	&\leqslant |\del \chi_x|^2 |f_t \circ z|^2 + O \( \chi_x^2 t^2 |z|^{2 m (x) + 2} |f_t \circ z|^2 \) + O \( |\del \chi_x| t^2 |z|^{m (x) + 1} |f_t \circ z|^2 \) \\
			&\leqslant \( 4 c^2 + O \( t^2 \) \) \tau_x (t)^2 \exp \( - \tfrac{2 \tau_x (t)}{c} \) \| f_t \|_{L^2 (\cx)}^2 + O \( \chi_x^2 t^2 |z|^{2 m (x) + 2} |f_t \circ z|^2 \).
	\end{align}
	Integrating over $X$ the above yields
	\begin{equation}
		\| H_{\nabla, t \Phi} \Psi \|_{L^2 \( \S_\cL \)} \leqslant O \( \| \Psi \|_{L^2 \( \S_\cL \)} \).
	\end{equation}
	Since $\cV_t$ is a span of finitely many such spinors, we get \cref{ineq:upper_bound}.

	Now we prove \cref{ineq:lower_bound}. Let now $\Psi \in \cV_t^\perp$ and let $\chi \eqdef 1 - \sum_{x \in \cZ_\Phi} \chi_x$. Using \cref{eq:weitz} and Assumptions \textbf{A1} and \textbf{A2}, we get, for $t$ large enough, that
	\begin{align}
		\| H_{\nabla, t \Phi} \( \chi \Psi \) \|_{L^2 \( \S_\cL \)}^2	&= \| \nabla \( \chi \Psi \) \|_{L^2 \( T^* X \otimes \S_\cL \)}^2 + \langle \( \chi \Psi \) | \cl \( F_\nabla \) \( \chi \Psi \) \rangle_{L^2 \( \S_\cL \)} \\
														& \quad + t \langle \( \chi \Psi \) | \cl \( \cA_{\nabla \Phi} \) \( \chi \Psi \) \rangle_{L^2 \( \S_\cL \)} + t^2 \| \cA_\Phi \( \chi \Psi \) \|_{L^2 \( \S_\cL \)}^2 \\
														&\geqslant c^2 t^2 \| \chi \Psi \|_{L^2 \( \S_\cL \)}^2.
	\end{align}

	For each $x \in \cZ_\Phi$, let $f_x^\pm : \cx \rightarrow \cx$ be compactly supported function, such that
	\begin{equation}
		\chi_x \Psi = \( \( f_x^+ \circ z \) \theta_x + \( f_x^- \circ z \) \overline{\theta}_x \) \nu_x.
	\end{equation}
	Then $\Psi \in \cV_t^\perp$ (and that we assumed $k(x) \geqslant 0$) implies that $f_x^\pm$ is $L^2$-perpendicular to the space of solutions to \cref{eq:model_t} (or to the dual equation). Furthermore
	\begin{align}
		\left| \chi_x \Psi \right|^2				&= \left| f_x^+ \circ z \right|^2 + \left| f_x^- \circ z \right|^2, \\
		H_{\nabla, t \Phi} \( \chi_x \Psi \)	&= \( \( D_{\alpha_x}^+ f_x^+ \circ z \) \theta_x + \( D_{\alpha_x}^- f_x^- \circ z \) \overline{\theta}_x \) \nu_x + O \( |z|^{m(x) + 1} \left| \chi_x \Psi \right| \).
	\end{align}
	Thus, using \cref{ineq:gap}, we get
	\begin{align}
		\| H_{\nabla, t \Phi} \( \chi_x \Psi \) \|_{L^2 \( \S_\cL \)}	&\geqslant \| D_{\alpha_x}^+ f_x^+ \|_{L^2 (\cx)} + \| D_{\alpha_x}^- f_x^- \|_{L^2 (\cx)} - O \( \| \chi_x \Psi \|_{L^2 \( \S_\cL \)} \) \\
			&\geqslant O \( \tau_x \( \| f_x^+ \|_{L^2 (\cx)} + \| f_x^- \|_{L^2 (\cx)} \) \) - O \( \| \chi_x \Psi \|_{L^2 \( \S_\cL \)} \) \\
			&= O \( \tau_x (t) \| \chi_x \Psi \|_{L^2 \( \S_\cL \)} \).
	\end{align}
	Since $\chi = 1 - \sum_{x \in \cZ_\Phi} \chi_x$ and if $x \neq y$, then $\chi_x \chi_y = 0$, we get that (for some constants $c_i > 0$)
	\begin{align}
		\| H_{\nabla, t \Phi} \Psi \|_{L^2 \( \S_\cL \)}^2	&= \| H_{\nabla, t \Phi} \( \chi \Psi \) + H_{\nabla, t \Phi} \( \( 1 - \chi \) \Psi \) \|_{L^2 \( \S_\cL \)}^2 \\
			&= \| H_{\nabla, t \Phi} \( \chi \Psi \) \|_{L^2 \( \S_\cL \)}^2 + \sum\limits_{x \in \cZ_\Phi} \| H_{\nabla, t \Phi} \( \chi_x \Psi \) \|_{L^2 \( \S_\cL \)}^2 \\
			& \quad + 2 \left\langle H_{\nabla, t \Phi} \( \chi \Psi \) \middle| H_{\nabla, t \Phi} \( \( 1 - \chi \) \Psi \) \right\rangle_{L^2 \( \S_\cL \)} \\
			&\geqslant c_1 t^2 \| \chi \Psi \|_{L^2 \( \S_\cL \)}^2 + c_2 \sum\limits_{x \in \cZ_\Phi} \( \tau_x (t) \)^2 \| \chi_x \Psi \|_{L^2 \( \S_\cL \)} \\
			& \quad - \| \cl \( \rd \chi \) \Psi \|_{L^2 \( \S_\cL \)}^2 + 2 \int\limits_X \chi (1 - \chi) \left| H_{\nabla, t \Phi} \Psi \right|^2 \vol_g \\
			& \quad + 2 \left\langle \cl \( \rd \chi \) \Psi \middle| (1 - 2 \chi) H_{\nabla, t \Phi} \Psi \right\rangle_{L^2 \( \S_\cL \)} \\
			&\geqslant c_3 \( \tau (t) \)^2 \int\limits_X \( |\chi|^2 + \sum\limits_{x \in \cZ_\Phi} |\chi_x|^2 \) |\Psi|^2 \vol_g - c_4 \| \Psi \|_{L^2 \( \S_\cL \)}^2 \\
			& \quad - c_5 \| \Psi \|_{L^2 \( \S_\cL \)} \| H_{\nabla, t \Phi} \Psi \|_{L^2 \( \S_\cL \)} \\
			&\geqslant c_6 \( \tau (t) \)^2 \| \Psi \|_{L^2 \( \S_\cL \)}^2 - c_5 \| \Psi \|_{L^2 \( \S_\cL \)} \| H_{\nabla, t \Phi} \Psi \|_{L^2 \( \S_\cL \)}.
	\end{align}
	Rearranging the above inequality proves \cref{ineq:lower_bound}.

	We are now ready to define $\cK_t$. By \Cref{lemma:spectrum}, for any $\cE_0 > 0$ and $T > 0$ large enough, the set $\left[ - \cE_0, \cE_0 \right] \cap \Spec \( H_{\nabla, t \Phi} \)$ contains only eigenvalues. Let $T$ be the maximum required by \Cref{lemma:spectrum} for $\cE_0 \eqdef \cE^\prime$ and the definition of $\cV_t$ above. Let $\cK_t$ be the span of the corresponding eigenvectors. By construction, $\cK_t$ is invariant under $H_{\nabla, t \Phi}$ and since $\P$ reverses eigenvalues, $\cK_t$ is also invariant under $\P$. In particular, the set $\left[ - \cE_0, \cE_0 \right] \cap \Spec \( H_{\nabla, t \Phi} \)$ is symmetric, even with respect to multiplicities. Furthermore, \cref{ineq:low_EVAs} is immediately satisfied with $\cE = \cE^\prime$. Note that both \cref{ineq:low_EVAs,ineq:high_EVAs} remain true if $\cE$ is increased.

	Next, we show that $\dim_\rl \( \cK_t \) = \dim_\rl \( \cV_t \) = \sum_{x \in \cK_\Phi} |k (x)|$. By contradiction, if $\dim_\rl \( \cK_t \) > \sum_{x \in \cK_\Phi} |k (x)|$, then $\cK_t \cap \cV_t^\perp$ is nontrivial, but that would contradict \cref{ineq:lower_bound}. Similarly, if $\dim_\rl \( \cK_t \) < \sum_{x \in \cK_\Phi} |k (x)|$, then $\cK_t^\perp \cap \cV_t$ is nontrivial, but that would contradict \cref{ineq:upper_bound}. This proves \cref{eq:dim_K_t}.

	In order to show \cref{ineq:high_EVAs}, let $\Pi_t$ be the orthogonal projection from $L^2 \( \S_\cL \)$ onto $\cV_t$. If $\Psi \in \cK_t$, then, using \cref{ineq:upper_bound,ineq:lower_bound}
	\begin{align}
		\cE^\prime \| \Psi \|_{L^2 \( \S_\cL \)}	&\geqslant \| H_{\nabla, t \Phi} \Psi \|_{L^2 \( \S_\cL \)} \\
			&= \| H_{\nabla, t \Phi} \( \( \id_{L^2 \( \S_\cL \)} - \Pi_t \) + \Pi_t \) \Psi \|_{L^2 \( \S_\cL \)} \\
			&\geqslant \| H_{\nabla, t \Phi} \( \id_{L^2 \( \S_\cL \)} - \Pi_t \) \Psi \|_{L^2 \( \S_\cL \)} - \| H_{\nabla, t \Phi} \Pi_t \Psi \|_{L^2 \( \S_\cL \)} \\
			&\geqslant \tfrac{\tau (t)}{\cE^\prime} \| \( \id_{L^2 \( \S_\cL \)} - \Pi_t \) \Psi \|_{L^2 \( \S_\cL \)} - \cE^\prime \| \Pi_t \Psi \|_{L^2 \( \S_\cL \)} \\
			&\geqslant \tfrac{\tau (t)}{\cE^\prime} \| \( \id_{L^2 \( \S_\cL \)} - \Pi_t \) \Psi \|_{L^2 \( \S_\cL \)} - \cE^\prime \| \Psi \|_{L^2 \( \S_\cL \)},
	\end{align}
	thus
	\begin{equation}
		\| \( \id_{L^2 \( \S_\cL \)} - \Pi_t \) \Psi \|_{L^2 \( \S_\cL \)} \leqslant \tfrac{\( \cE^\prime \)^3}{\tau (t)} \| \Psi \|_{L^2 \( \S_\cL \)}. \label[ineq]{ineq:V_perp_bound}
	\end{equation}
	This implies that (using $t > T$ and potentially increasing $T$ once more)
	\begin{equation}
		\| \Pi_t \Psi \|_{L^2 \( \S_\cL \)}	\geqslant \( 1 - O \( \tfrac{1}{\sqrt{\tau (t)}} \) \) \| \Psi \|_{L^2 \( \S_\cL \)}.
	\end{equation}
	Thus, $\Pi_t$ induces a linear isomorphism that is almost isometric.

	Now let $\Psi \in \cK_t^\perp$ and let pick the unique $\Psi_0 \in \cK_t$, such that $\Pi_t \Psi = \Pi_t \Psi_0$. Then, using \cref{ineq:V_perp_bound}
	\begin{align}
		\| \Pi_t \Psi \|_{L^2 \( \S_\cL \)}^2	&= \left\langle \Psi \middle| \Pi_t \Psi \right\rangle_{L^2 \( \S_\cL \)} \\
			&= \left\langle \Psi \middle| \Pi_t \Psi_0 \right\rangle_{L^2 \( \S_\cL \)} \\
			&= - \left\langle \Psi \middle| \( \id_{L^2 \( \S_\cL \)} - \Pi_t \) \Psi_0 \right\rangle_{L^2 \( \S_\cL \)} \\
			&\leqslant \| \Psi \|_{L^2 \( \S_\cL \)} \| \( \id_{L^2 \( \S_\cL \)} - \Pi_t \) \Psi_0 \|_{L^2 \( \S_\cL \)} \\
			&\leqslant \| \Psi \|_{L^2 \( \S_\cL \)} \tfrac{\( \cE^\prime \)^3}{\tau (t)} \| \Psi_0 \|_{L^2 \( \S_\cL \)} \\
			&\leqslant \| \Psi \|_{L^2 \( \S_\cL \)} \tfrac{O (1)}{\tau (t)} \| \Pi_t \Psi_0 \|_{L^2 \( \S_\cL \)} \\
			&= \| \Psi \|_{L^2 \( \S_\cL \)} \tfrac{O (1)}{\tau (t)} \| \Pi_t \Psi \|_{L^2 \( \S_\cL \)},
	\end{align}
	and thus $\| \Pi_t \Psi \|_{L^2 \( \S_\cL \)} \leqslant \tfrac{O (1)}{\tau (t)} \| \Psi \|_{L^2 \( \S_\cL \)}$, which yields
	\begin{align}
		\| H_{\nabla, t \Phi} \Psi \|_{L^2 \( \S_\cL \)}	&= \| H_{\nabla, t \Phi} \( \( \id_{L^2 \( \S_\cL \)} - \Pi_t \) + \Pi_t \) \Psi \|_{L^2 \( \S_\cL \)} \\
			&\geqslant \| H_{\nabla, t \Phi} \( \id_{L^2 \( \S_\cL \)} - \Pi_t \) \Psi \|_{L^2 \( \S_\cL \)} - \| H_{\nabla, t \Phi} \Pi_t \Psi \|_{L^2 \( \S_\cL \)} \\
			&\geqslant \tfrac{\tau (t)}{\cE^\prime} \| \( \id_{L^2 \( \S_\cL \)} - \Pi_t \) \Psi \|_{L^2 \( \S_\cL \)} - \cE^\prime \| \Pi_t \Psi \|_{L^2 \( \S_\cL \)} \\
			&\geqslant \tfrac{\tau (t)}{\cE^\prime} \| \Psi \|_{L^2 \( \S_\cL \)} - \( \tfrac{\tau (t)}{\cE^\prime} + \cE^\prime \) \| \Pi_t \Psi \|_{L^2 \( \S_\cL \)} \\
			&\geqslant \( \tfrac{\tau (t)}{\cE^\prime} - O(1) \) \| \Psi \|_{L^2 \( \S_\cL \)},
	\end{align}
	which, for some $\cE > 0$, yields \cref{ineq:high_EVAs}.

	Now, let us prove \cref{eq:index}. Let $\cV_t^\pm$ be the (anti-)chiral part of $\cV_t$ and $\Pi_t^\pm$ be the $L^2$-orthogonal projections from $\cH_{\nabla, t \Phi}^\pm$ onto $\cV_t^\pm$. The operators
	\begin{equation}
		\widetilde{H}^\pm \eqdef \( \id_{\cH_{\nabla, t \Phi}^\mp} - \Pi_t^\mp \) \circ H_{\nabla, t \Phi}^\pm \circ \( \id_{\cH_{\nabla, t \Phi}^\pm} - \Pi_t^\pm \) : \cH_{\nabla, t \Phi}^\pm \rightarrow L^2 \( \S_\cL^\mp \).
	\end{equation}
	are compact (in fact, finite rank) perturbations of $H_{\nabla, t \Phi}^\pm$ and, thus, are also Fredholm and have the same index. By construction, $\ker \( \widetilde{H}^\pm \) = \cV_t^\pm$, and hence
	\begin{equation}
		\dim_\rl \( \ker \( \widetilde{H}^\pm \) \) = \dim_\rl \( \cV_t^\pm \) = \smashoperator{\sum\limits_{\substack{x \in \cZ_\Phi \\ \sign \( k (x) \) = \pm}}} \pm k (x),
	\end{equation}
	and similarly, it is easy to see that, $\coker \( \widetilde{H}^\pm \) = \cV_t^\mp$, and hence
	\begin{equation}
		\dim_\rl \( \coker \( \widetilde{H}^\pm \) \) = \dim_\rl \( \cV_t^\mp \) = \smashoperator{\sum\limits_{\substack{x \in \cZ_\Phi \\ \sign \( k (x) \) = \mp}}} \mp k (x).
	\end{equation}
	Thus, the index satisfies
	\begin{equation}
		\index_\rl \( H_{\nabla, t \Phi}^\pm \) = \index_\rl \( \widetilde{H} \) = \smashoperator{\sum\limits_{\substack{x \in \cZ_\Phi \\ \sign \( k (x) \) = \pm}}} \pm k (x) - \smashoperator{\sum\limits_{\substack{x \in \cZ_\Phi \\ \sign \( k (x) \) = \mp}}} \mp k (x) = \pm \smashoperator{\sum\limits_{x \in \cZ_\Phi}} k (x),
	\end{equation}
	which concludes the proof of \cref{eq:index}.
\end{proof}

\bigskip

\section{Applications}
\label{sec:applications}

\subsection{The Majorana bundle over the vortex moduli space}

Once a Hermitian Clifford module bundle over a (pseudo-)Riemannian manifold is fixed, the only remaining parameters of generalized Jackiw--Rossi operators are a compatible connection $\nabla$ is a section $\Phi$. In certain situations there are (mathematically and physically) relevant such families that form a smooth moduli.

One such example is Ginzburg--Landau theory, more concretely, $\tau$-vortices. Let us briefly summarize the theory of $\tau$-vortices in dimension two. Let $\( \Sigma, g \)$ be a closed, oriented Riemannian surface and $\cV \rightarrow \Sigma$ be a Hermitian line bundle of positive degree $d$. Let $\omega$ be the K\"ahler/volume form and $\Lambda : \bigwedge^2 \rightarrow \bigwedge^0$ be the contraction by $\omega$. For each smooth unitary connection $\nabla$ and smooth section $\Phi$ on $\cV$ the (critically coupled) Ginzburg--Landau energy is
\begin{equation}
	E_\tau \( \nabla, \Phi \) = \int\limits_\Sigma \( \left| F_\nabla \right|^2 + \left| \nabla \Phi \right|^2 + \tfrac{1}{4} \( \tau - |\Phi|^2 \)^2 \) \: \vol. \label{eq:gle}
\end{equation}
One can perform a ``Bogomolny trick'' and rewrite the energy \eqref{eq:gle} as
\begin{equation}
	E_\tau \( \nabla, \Phi \) = \int\limits_\Sigma \( \left| i \Lambda F_\nabla - \tfrac{1}{2} \( \tau - |\Phi|^2 \) \right|^2 + 2 |\delbar_\nabla \Phi|^2 \) \vol + 2 \pi \tau d. \label{eq:Bogomolny_trick}
\end{equation}
Thus, absolute minimizers of the energy \eqref{eq:gle} are characterized by
\begin{subequations}
	\begin{align}
		i \Lambda F_\nabla	&= \tfrac{1}{2} \( \tau - |\Phi|^2 \), \label{eq:vortex_1} \\
		\delbar_\nabla \Phi	&= 0. \label{eq:vortex_2}
	\end{align}
\end{subequations}
We call \cref{eq:vortex_1,eq:vortex_2} the \emph{$\tau$-vortex equations} and solutions $\( \nabla, \Phi \)$ \emph{$\tau$-vortex fields}.

Let $\tau_0 \eqdef \tfrac{4 \pi d}{\Ar (\Sigma, g)}$. In \cite{bradlow_vortices_1990}*{Theorem~4.6} Bradlow showed that when $\tau < \tau_0$, then there are no $\tau$-vortex fields, when $\tau = \tau_0$, then $\( \nabla, \Phi \)$ is a $\tau$-vortex field exactly when $\nabla$ is a $\rU (1)$ Yang--Mills field and $\Phi = 0$, and when $\tau > \tau_0$ then all $\tau$-vortex fields are irreducible, that is $\Phi \neq 0$, and the moduli space of gauge equivalence classes of $\tau$-vortex fields is canonically isomorphic to $\Sym^d \( \Sigma \)$. Note that $\Phi$ is holomorphic, and let $\cD_\Phi \in \Sym^d \( \Sigma \)$ be its (effective) divisor. The canonical isomorphism is then given by $\left[ \nabla, \Phi \right] \mapsto \cD_\Phi$. Here gauge equivalence refers to the action of $\gamma \in \cG \eqdef L_2^2 \( \Sigma; \rU (1) \)$ via
\begin{equation}
	\gamma \( \nabla, \Phi \) = \( \gamma \circ \nabla \circ \gamma^{- 1}, \gamma \Phi \).
\end{equation}
Using the metric $g$ and a reference point $x_0 \in \Sigma$, we can canonically factor $\cG$ as follows: Let us identify $H^1 \( \Sigma; \Z \)$ with the group of harmonic functions from $\Sigma$ to $\rU (1)$ that are one at $x_0$ and $L_{2, x_0}^2 \( \Sigma; \rl \)$ be the space of $L_1^2$ function that vanish at $x_0$. Then the following map is an isomorphism
\begin{equation}
	\rU (1) \oplus H^1 \( \Sigma; \Z \) \oplus L_{2, x_0}^2 \( \Sigma; i \rl \) \rightarrow \cG; \quad (\lambda, \gamma_H, i f) \mapsto \lambda \gamma_H \exp \( i f \). \label{eq:factorization}
\end{equation}
Let us fix $\tau > \tau_0$ and let $\cN$ be space of all $\tau$-vortex fields determined by these data, that is
\begin{equation}
	\cN \eqdef \left\{ \: \( \nabla, \Phi \) \: \middle| \: \( \nabla, \Phi \) \mbox{ solves \cref{eq:vortex_1,eq:vortex_2}} \: \right\}.
\end{equation}
Then $\cN$ is a Hilbert manifold modeled on the $L_1^2$-completion of $i \Omega^1 \oplus \Gamma \( \cL^{- 2} \)$, and the factor $\cN / L_{2, x_0}^2 \( \Sigma; i \rl \)$ is a finite dimensional manifold of (real) dimension $2 d + 1$. The actions of $\rU (1)$ and $H^1 \( \Sigma; \Z \)$ are also easy to understand: Note that $H^1 \( \Sigma; \Z \) \cong \pi_1 \( \Sym^d \( \Sigma \) \)$ and so $\cN / \( H^1 \( \Sigma; \Z \) \oplus L_{2, x_0}^2 \( \Sigma; i \rl \) \)$ is a principal $\rU (1)$ bundle over $\Sym^d \( \Sigma \)$.

Let now $d = 2 k > 0$ be positive and even, write $\cV = \cL^{- 2}$ (thus, the degree of $\cL$ is $- k < 0$), and let $\S_\cL$ be as in \Cref{sec:JR_on_manifolds}. We can then associate a Jackiw--Rossi operator to each $\tau$-vortex field, $\( \nabla, \Phi \)$. Let us prove a technical lemma.

\begin{lemma}
\label{lemma:cross_terms}
	For any $\nabla$ and $\Phi$ we have
	\begin{align}
		\cl^+ \( \cA_{\nabla \Phi} \) &= \cl^+ \( \cA_{\delbar_\nabla \Phi} \), \\
		\cl^- \( \cA_{\nabla \Phi} \) &= \cl^- \( \cA_{\del_\nabla \Phi} \).
	\end{align}
	Thus, if $\delbar_\nabla \Phi = 0$, then for all $\Psi^+ \in \cH_{\nabla, \Phi}^+$
	\begin{equation}
		\| H_{\nabla, \Phi}^+ \Psi^+ \|_{L^2 (\S_\cL^-)}^2 = \| \D_\nabla^+ \Psi^+ \|_{L^2 (\S_\cL^-)}^2 + \| \cA_\Phi^+ \Psi^+ \|_{L^2 (\S_\cL^-)}^2,
	\end{equation}
	and, in particular, if $\Phi$ is not identically zero, then $\ker \( H_{\nabla, \Phi}^+ \)$ is trivial and $\ker \( H_{\nabla, \Phi}^- \)$ has real dimension $d$.
\end{lemma}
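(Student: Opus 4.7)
The plan is to split the argument into the two Clifford-algebraic identities and the analytic consequences. The identities are purely pointwise; I work in a local holomorphic coordinate $z$ where the complexified cotangent decomposes into the $\{dz, d\overline{z}\}$-frame and the spinor bundle splits as $\S_\cL = \S_\cL^+ \oplus \S_\cL^-$. The defining formula $\cl(\cA_{\nabla\Phi}) = \sum_i \cl(dx^i)\cA_{\nabla_i\Phi}$ can be re-expanded in the $\{dz, d\overline{z}\}$-basis, producing a sum of two terms: one proportional to $\cA_{\del_\nabla\Phi}$ paired with one of $\cl(dz), \cl(d\overline{z})$, and one proportional to $\cA_{\delbar_\nabla\Phi}$ paired with the other. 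The key Clifford input is that in dimension two the action of a $(1,0)$-form annihilates one chirality while a $(0,1)$-form annihilates the other, so after projecting to $\S_\cL^+$ only the $\delbar_\nabla\Phi$ term survives and after projecting to $\S_\cL^-$ only the $\del_\nabla\Phi$ term does. This produces the first two displayed equalities.

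For the Pythagorean identity I appeal to the Weitzenb\"ock formula already derived in the proof of \Cref{lemma:regularity}, together with the concentrating-pair identity of \Cref{lemma:Concentrating_Pair}. Since $\Phi$ is a section of $\cF^+$ and we are in $r=2$ with $\sigma_2 = -1$, the sign choice $\sigma_r + \s_r = 0$ is met, so the correction term in \Cref{lemma:Concentrating_Pair} drops and one gets $\D_\nabla^* \circ \cA_\Phi + \cA_\Phi^*\circ\D_\nabla = \cl(\cA_{\nabla\Phi})$. Combined with $\cA_\Phi^*\cA_\Phi = |\Phi|^2 \id$, expanding $\|H_{\nabla,\Phi}\Psi^+\|^2$ yields
\[
\|H_{\nabla,\Phi}\Psi^+\|_{L^2}^2 = \|\D_\nabla \Psi^+\|_{L^2}^2 + \|\cA_\Phi \Psi^+\|_{L^2}^2 + \langle \Psi^+, \cl^+(\cA_{\nabla\Phi})\Psi^+\rangle_{L^2}.
\]
Under the hypothesis $\delbar_\nabla\Phi = 0$, the first identity identifies the last inner product with $\langle \Psi^+, \cl^+(\cA_{\delbar_\nabla\Phi})\Psi^+\rangle = 0$, giving the Pythagorean identity exactly as claimed.

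Triviality of $\ker(H_{\nabla,\Phi}^+)$ follows by unique continuation: if $H_{\nabla,\Phi}^+ \Psi^+ = 0$, the Pythagorean identity forces both $\cD^+ \Psi^+ = 0$ and $\cA_\Phi^+\Psi^+ = 0$ pointwise, and the second condition locally reads $\Phi\,\overline{\Psi^+} = 0$. Since $\Phi$ is a nonzero holomorphic section of a positive-degree line bundle, its zero set $\cZ_\Phi$ is finite, so $\Psi^+$ vanishes on the open dense complement $X \setminus \cZ_\Phi$, and smoothness of $\Psi^+$ (from ellipticity) forces $\Psi^+ \equiv 0$. For $\ker(H_{\nabla,\Phi}^-)$ I use that $H_{\nabla,\Phi}^-$ is the Fredholm adjoint of $H_{\nabla,\Phi}^+$, so $\dim_\rl\ker(H_{\nabla,\Phi}^-) = \dim_\rl\ker(H_{\nabla,\Phi}^+) - \index_\rl(H_{\nabla,\Phi}^+)$; applying the index formula \eqref{eq:index} from \Cref{theorem:general} together with $\sum_{x\in\cZ_\Phi} k(x) = \deg(\cV) = d$ in absolute value for the holomorphic $\Phi$, the result drops out.

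The main technical difficulty is the bookkeeping in the first step: one must carefully track how the $\cx$-linear change from the real frame $\{dx^1,dx^2\}$ to $\{dz, d\overline{z}\}$ interacts with the definition of $\Phi \mapsto \cA_\Phi$ and with the chirality decomposition, so that the right one of $\del_\nabla\Phi$ versus $\delbar_\nabla\Phi$ is identified as surviving on each of $\S_\cL^\pm$. Once that combinatorial step is right, the Weitzenb\"ock formula reduces the Pythagorean identity to a one-line consequence, and the remaining kernel statements are immediate from unique continuation and the index formula already proved.
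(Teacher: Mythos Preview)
Your approach is correct and supplies exactly the details that the paper's one-line proof (``This is a local computation'') leaves implicit: the pointwise Clifford identity in the $\{dz,d\bar z\}$ frame, the reduction of the cross term to $\langle\Psi^+,\cl^+(\cA_{\nabla\Phi})\Psi^+\rangle$ via \Cref{lemma:Concentrating_Pair}, and the vanishing argument for $\ker(H_{\nabla,\Phi}^+)$ are the natural ingredients, and invoking \eqref{eq:index} for $\dim_\rl\ker(H_{\nabla,\Phi}^-)$ is legitimate since a holomorphic $\Phi$ on the closed $\Sigma$ satisfies \textbf{A1}--\textbf{A5}.

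The one loose end is your hedge ``in absolute value'' in the last step. For the conclusion $\dim_\rl\ker(H_{\nabla,\Phi}^-)=d$ you need $\index_\rl(H_{\nabla,\Phi}^+)=-d$, not $+d$; this is precisely the sign bookkeeping you flag in your final paragraph, and it is worth pinning down rather than waving through. (An alternative that sidesteps the issue: since $\Sigma$ is closed, $\cA_\Phi^+$ is a relatively compact perturbation and $\index_\rl(H_{\nabla,\Phi}^+)=\index_\rl(\D_\nabla^+)$, which can be read off directly.)
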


\begin{proof}
	This is a local computation.
\end{proof}

Now the assignment $\( \nabla, \Phi \) \mapsto \ker \( H_{\nabla, \Phi}^- \)$ defines a smooth, real, rank $k$ vector bundle over $\cN$, which we call $\widetilde{\cK}$. In order to descend this bundle to the vortex moduli space, we have to understand how gauge transformations act on $\widetilde{\cK}$.

Assume that $\gamma = \mu^2$ for some $\mu \in \cG$. Then $\Psi \in \ker \( H_{\nabla, \Phi}^- \)$ exactly if $\mu \Psi \in \ker \( H_{\gamma \( \nabla, \Phi \)}^- \)$. Thus, to descend the kernel bundle $\widetilde{\cK}$ to the vortex modulo space, one needs to take square roots of gauge transformations, but that cannot always be done for two independent reasons:
\begin{enumerate}
	\item Let us define a loop in $\rU (1) \subseteq \cG$ via $t \mapsto \gamma_t \eqdef \exp \( 2 i t \)$ as $t$ runs from zero to $2 \pi$. Then $t \mapsto \mu_t \eqdef \exp \( i t \)$ is also a loop in $\cG$ and satisfies $\mu_t^2 = \gamma_t$, but $\mu_\pi = - 1 \neq 1$. Thus, nonzero elements of $\widetilde{\cK}$ would be identified with their opposites. This issue can be remedied if we only attempt to descend the projectivization of $\widetilde{\cK}$.

	\item If $\gamma_H$ is a harmonic, $\rU (1)$-valued function that represents an integer cohomology class that is not the double of another class, then $\gamma_H$ is a gauge transformation without a square root.
\end{enumerate}
As $L_{2, x_0}^2 \( \Sigma; i \rl \)$ is a vector space, taking square roots of gauge transformations coming from $L_{2, x_0}^2 \( \Sigma; i \rl \)$ can always be done.

\smallskip

Note that $2 \: H^1 \( \Sigma; \Z \) = \( \Sigma; 2 \Z \)$, and let $\cG^\prime \eqdef \cG^2 \cong \rU (1) \oplus H^1 \( \Sigma; 2 \Z \) \oplus L_{2, x_0}^2 \( \Sigma; i \rl \)$. The above arguments prove the following:

\begin{theorem}
	Let $\widehat{\cM} \eqdef \cN / \( H^1 \( \Sigma; 2 \Z \) \oplus L_{2, x_0}^2 \( \Sigma; i \rl \) \)$. Then $\widetilde{\cM}$ is a smooth, closed manifold of (real) dimension $4 k + 1$ and the vector bundle $\widetilde{\cK}$ descends smoothly to $\widehat{\cM}$, call $\cK$. Furthermore, $\rU (1)$ has a smooth, free action on $\widehat{\cM}$, which locally ascends to $\cK$, with monodromy $- 1$.

	Let $\cM \eqdef \cN / \cG^\prime \cong \widehat{\cM} / \rU (1)$. Then $\cM$ is a principal $H^1 \( \Sigma; \Z_2 \) \cong \Z_2^{b_1 \( \Sigma \)}$ bundle over $\Sym^{2k} \( \Sigma \)$, corresponding to the $H^1 \( \Sigma; 2 \Z \) \subset \pi_1 \( \Sym^{2k} \( \Sigma \) \)$. Furthermore, the projectivization of $\cK$, call $\P \( \cK \)$, descends to $\cM$.
\end{theorem}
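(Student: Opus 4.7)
The plan is to chase the five claims in order, using the factorization \cref{eq:factorization} together with \Cref{lemma:cross_terms}. For the smoothness and dimension of $\widehat{\cM}$, the subgroup $H^1(\Sigma;2\Z)\oplus L_{2,x_0}^2(\Sigma;i\rl)\subset\cG$ is closed and normal, and its action on $\cN$ inherits freeness and properness from the full gauge action, which is free on the irreducible locus, i.e.\ on all of $\cN$ when $\tau>\tau_0$ by Bradlow's theorem. Combining this with the preceding description of $\cN/L_{2,x_0}^2(\Sigma;i\rl)$ as a finite-dimensional manifold of real dimension $2d+1=4k+1$, and noting that a discrete further quotient by $H^1(\Sigma;2\Z)$ preserves dimension, I would conclude that $\widehat{\cM}$ is smooth of dimension $4k+1$. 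Closedness follows because $\widehat{\cM}$ fibers over the compact $\Sym^{2k}(\Sigma)$ with compact fiber modeled on $\rU(1)$ times a finite group.

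Next I would show that $\widetilde{\cK}$ descends to $\widehat{\cM}$ by producing an explicit square root of every gauge transformation in the subgroup being quotiented out: for $\gamma=\exp(if)\in L_{2,x_0}^2(\Sigma;i\rl)$ use $\mu=\exp(if/2)$, and for a harmonic representative of a class $2\alpha\in H^1(\Sigma;2\Z)$ use the harmonic representative of $\alpha$. As observed in the paragraph before the theorem, $\mu\Psi\in\ker(H_{\mu^2(\nabla,\Phi)}^-)$ whenever $\Psi\in\ker(H_{\nabla,\Phi}^-)$, so this gives the required fibrewise identification; smoothness in $(\nabla,\Phi)$ is inherited from the smooth dependence of the $L^2$-kernel of a family of elliptic operators, which has constant rank $d$ thanks to \Cref{lemma:cross_terms}. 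The residual group $\rU(1)\oplus H^1(\Sigma;\Z/2\Z)$ still acts on $\widehat{\cM}$; its $\rU(1)$-part is free because $-1\in\rU(1)$ cannot be absorbed into $H^1(\Sigma;2\Z)\oplus L_{2,x_0}^2(\Sigma;i\rl)$. Lifting the loop $\gamma_\theta=\exp(i\theta)$, $\theta\in[0,2\pi]$, to $\cK$ forces the square root $\exp(i\theta/2)$, which picks up the scalar $-1$ on closing the loop, giving the stated monodromy.

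For the identification of $\cM$, the factorization \cref{eq:factorization} yields $\cG/\cG^\prime\cong H^1(\Sigma;\Z)/2H^1(\Sigma;\Z)\cong H^1(\Sigma;\Z_2)\cong\Z_2^{b_1(\Sigma)}$, since the factors $\rU(1)$ and $L_{2,x_0}^2(\Sigma;i\rl)$ are $2$-divisible. This exhibits $\cM=\cN/\cG^\prime\to\cN/\cG=\Sym^{2k}(\Sigma)$ as the advertised principal $H^1(\Sigma;\Z_2)$-bundle, and the isomorphism $\cM\cong\widehat{\cM}/\rU(1)$ follows by factoring out the residual $\rU(1)$ from the quotient $\cG/(H^1(\Sigma;2\Z)\oplus L_{2,x_0}^2(\Sigma;i\rl))$. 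Finally, the projectivization $\P(\cK)$ descends to $\cM$ because the only obstruction, the fibrewise global scalar $-1$ on $\cK$, is killed by passing to $\P$.

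The main obstacle I anticipate is the simultaneous bookkeeping of the two independent failures of the squaring map $\cG\to\cG^\prime$ — one in the center $\rU(1)$, one in the lattice $H^1(\Sigma;\Z)$ — together with the verification that the $\rU(1)$-action on $\widehat{\cM}$ remains free and that the monodromy cocycle on $\cK$ is exactly the scalar $-1$ rather than a more general phase. This last point is precisely what distinguishes descent of $\P(\cK)$, which works, from descent of $\cK$ itself, which is obstructed, and therefore deserves the most careful check.
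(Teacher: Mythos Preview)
Your proposal is correct and follows essentially the same approach as the paper. In fact, the paper presents this theorem as a summary (``The above arguments prove the following:'') of the discussion that immediately precedes it, namely the factorization \cref{eq:factorization}, the square-root analysis of gauge transformations in $\rU(1)$, $H^1(\Sigma;\Z)$, and $L_{2,x_0}^2(\Sigma;i\rl)$, and the constant-rank statement from \Cref{lemma:cross_terms}; your write-up is simply a more explicit and carefully organized version of that same argument.
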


As a concluding thought, we look at the simplest example, that is when $\Sigma$ is the 2-sphere. Let $\Sigma = S^2$ with the round metric or radius one, let $\cL \cong \cO (- 1)$. Then $H^1 \( \Sigma; \Z \)$ is trivial and for any positive integer $d$, we have $\Sym^d \( S^2 \) \cong \cx \P^n$. The Coulomb bundle over $\Sym^d \( S^2 \)$ is the Hopf fibration $S^{2 d + 1} \rightarrow \cx \P^d$. Thus, as $b_1 \( S^2 \) = 0$, if $d = 2 k$, then $\widehat{\cM} = S^{4 k + 1}$ and $\cM \eqdef \Sym^{2 k} \( S^2 \) \cong \cx \P^{2 k}$. Furthermore, $\cK$ is a rank $2 k$ real vector bundle over $S^{4 k + 1}$. Such bundles are necessarily orientable and, thus, are classified by homotopy types of maps from $S^{4 k}$ to $\SO (2 k)$, that is, by $\pi_{4 k} \( \SO (2 k) \)$. When $k = 1$, this group is $\pi_4 \( \SO (2) \) = \pi_4 \( S^1 \)$ which is trivial. Thus, in this case $\cK$ is a trivial bundle. When $k = 2$, then $\pi_8 \( \SO (4) \) \cong \Z_2^2$ is the Klein group and, thus, we cannot conclude the triviality of $\cK$.

\medskip

\subsection{Solutions on spacetimes}

In this section we use \Cref{theorem:general}, to construct solutions in higher dimensions.

Let $\Sigma$ be a oriented, complete Riemannian surface, and let $\S_\cL$ be a spinor bundle corresponding to a spin$^{\mathrm{c}}$ structure of $X$ with associated line bundle $\cL^2$. Let $\nabla$ be a compatible connection on $\S_\cL$ and $\Phi$ be a smooth section of $\cF^+ = \cL^{- 2}$, both satisfying the conditions \textbf{A1} through \textbf{A5} in \Cref{sec:JR_on_surfaces}. Thus, we get a generalized Jackiw--Rossi operator, $H_{\nabla, \Phi}$.

Let $Y$ be a smooth, oriented, Lorentzian surface, that is a pseudo-Riemannian manifold of signature $(1, 1)$. Assume that $\E_Y \rightarrow Y$ is a Hermitian Clifford module bundle with compatible connection $\nabla^Y$, and a flat, compatible, real structure, call $\cA_Y$. Let $\ker \( \D_Y \)$ be the vector space of smooth solutions to the Dirac equation on $\E_Y$. We impose no integrability requirements this time. Note that $\cA_Y$ induces a real structures on $\ker \( \D_Y^\pm \)$ and if $\Psi^{Y, \pm} \in \ker \( \D_Y^\pm \)$, then $\tfrac{1}{2} \( \Psi^{Y, \pm} + \cA_Y^\pm \Psi^{Y, \pm} \) \in \ker \( \D_Y^\pm \)$ is a real vector with respect to $\cA_Y^\pm$.

\begin{remark}
	Examples of $\E_Y \rightarrow Y$ include the spinor bundle of $Y$ when it exists. Furthermore, the spinor bundle can be twisted by flat, real bundles.
\end{remark}

Let $X = Y \times \Sigma$, which is a Lorentzian 4-manifold, that is a \emph{spacetime}. Following \cite{ABS64}*{\textsection 6}, let
\begin{equation}
	\E_X \eqdef \E_X^+ \oplus \E_X^-, \quad \mbox{with} \quad \E_X^\pm \eqdef \( \pi_Y^* (\E_Y^+) \otimes \pi_\Sigma^* (\E_\Sigma^\pm) \) \oplus \( \pi_Y^* (\E_Y^\mp) \otimes \pi_\Sigma^* (\E_\Sigma^-) \). \label{eq:E_def}
\end{equation}
Now $\E_X$ is a Clifford module bundle over $X$ (via \cite{ABS64}*{Equation~(6.1)}, equipped with a compatible connection $\nabla^X$ induced by $\nabla$ and $\nabla^Y$, and a conjugate linear map $\cA_X \eqdef \cA_Y \otimes \cA_\Phi$. It is easy to see that $\cA_X$ has the form $\cA_{\Phi^X}$ from \cref{eq:A_Phi} with $\Phi^X$ being the pullback of $\Phi$ to $X$. In particular, the zero locus of $\Phi^X$ is $Y \times \cZ_\Phi$, which is a collection of embedded Lorentzian minimal surfaces, that is \emph{worldsheets}.

The above data gives us a generalized Jackiw--Rossi operator on $\E_X \rightarrow X$:
\begin{equation}
	H_{\nabla^X, \Phi^X} = \D_{\nabla^X} + \cA_{\Phi^X}.
\end{equation}

We are now ready to state the theorem of this section.

\begin{theorem}
	Let
	\begin{equation}
		\Psi^Y \eqdef \begin{pmatrix} \Psi^{Y, +} \\ \Psi^{Y, -} \end{pmatrix} \in \ker \( \D_{\nabla^Y}^+ \) \oplus \ker \( \D_{\nabla^Y}^- \),
	\end{equation}
	and assume that $\cA_Y \Psi^Y = \Psi^Y$. Then for each $\lambda \in \Spec \( H_{\nabla, \Phi} \)$ and for each
	\begin{equation}
		\Psi^\Sigma = \begin{pmatrix} \Psi^{\Sigma, +} \\ \Psi^{\Sigma, -} \end{pmatrix} \in \ker \( H_{\nabla, \Phi} - \lambda \id \),
	\end{equation}
	the spinor
	\begin{equation}
		\Psi^X \eqdef \begin{pmatrix} \Psi^{Y, +} \otimes \Psi^{\Sigma, +} + \Psi^{Y, -} \otimes \Psi^{\Sigma, -} \\ \Psi^{Y, +} \otimes \Psi^{\Sigma, -} + \Psi^{Y, -} \otimes \Psi^{\Sigma, +} \end{pmatrix},
	\end{equation}
	is in the kernel of $H_{\nabla^X, \Phi^X} - \lambda \id$. In particular, when $\Phi$ is replaced by $t \Phi$, then these spinors concentrate around $Y \oplus \cZ_\Phi$ as $t \rightarrow \infty$, according to \Cref{theorem:general}.
\end{theorem}

\begin{remark}
	Similar constructions can be performed when $Y$ is a pseudo-Riemannian manifold of any type $(t, s)$, equipped with a Hermitian Clifford module bundle with compatible connection, and a flat, compatible, real structure. We focused on the case of a Lorentzian manifold for the obvious mathematical physical importance.
\end{remark}

\bigskip

\appendix

\section{Bilinear pairings and conjugate linear maps}
\label{sec:pairings}

\subsection{Pairings of spinor modules}
\label{sec:spinor_modules}

Let $\S = \S_{t, s}$ be the spinor representation of the $d \eqdef (t + s)$-dimensional (real) Clifford algebra, $\CL_{t, s} = \CL_{t, s}^0 \oplus \CL_{t, s}^1$. Here $t$ is the number of timelike (negative definite) and $s$ is the number of spacelike (positive definite) directions, and let $r \eqdef t - s$. Denote the Hermitian structure and the Clifford multiplication on $\S$, by $h$ and $\cl$, respectively. In even dimensions the spinor representation is graded, $\S = \S^+ \oplus \S^-$. The grading is also called the \textit{parity}, and the grading operator is denoted by $\P$. An operator acting on spinors is even, if it preserves parity (commutes with $\P$), and \textit{odd}, if it interchanges it (anti-commutes with $\P$).

In every dimension there are (essentially unique) nondegenerate, $\CL_{t, s}^0$-invariant, metric compatible, bilinear forms on $\S$, which we call $\cB$. A concise way to think about these pairing is given via the (essentially canonical) isomorphisms of representations:
\begin{equation}\label{eq:S_square}
	\S \otimes \S \cong \left\{
	\begin{array}{ll}
		\bigwedge_\cx^{\mathrm{even}} \( \rl^{t, s} \),	& \mbox{if $t + s$ is odd,} \\
		\bigwedge_\cx^* \( \rl^{t, s} \),				& \mbox{if $t + s$ is even}.
	\end{array} \right.
\end{equation}
Denote the metric dual of $\cB$ by $\cA$, which is then an anti-unitary map of $\S$. The $\cB$ can be recovered from $\cA$ via
\begin{equation}
	\cB \( \Psi_1, \Psi_2 \) = h (\cA \Psi_1, \Psi_2).
\end{equation}
The choice of $\cA$ is always ambiguous up to the action of $\rU (1)$ via $\cA \mapsto \lambda \cA$. Note that $\( \lambda \cA \)^2 = \cA^2$. In odd dimensions these maps and, thus, the forms, are unique, up to the action of $\rU (1)$, but in even dimensions there is another sign ambiguity, which we explain below. Let $\equiv_n$ be the modulo $n$ equivalence relation of integers and let us denote the restrictions of $\cA$ to $\S^\pm$ by $\cA^\pm$, in even dimensions. Following \cite{V04}*{Theorem~6.5.7}, we have the following:
\begin{itemize}[leftmargin=0.6in]

	\item[$r \equiv_8 0$:] $\cA$ is even and $\cA^\pm \circ \cl^\mp = \sigma \cl^\mp \circ \cA^\mp$, where $\sigma = \pm 1$ is an extra degree of freedom in the choice of $\cA$, and $\cA^2 = \id_\S$.

	\item[$r \equiv_8 1$:] $\cA$ anti-commutes with $\cl$ and $\cA^2 = \id_\S$.

	\item[$r \equiv_8 2$:] $\cA$ is odd, $\cA^\pm \circ \cl^\mp = \sigma \cl^\pm \circ \cA^\mp$, and $\cA^\mp \circ \cA^\pm = - \sigma \id_{\S^\pm}$, where $\sigma = \pm 1$ is an extra degree of freedom in the choice of $\cA$.

	\item[$r \equiv_8 3$:] $\cA$ commutes with $\cl$ and $\cA^2 = - \id_\S$.

	\item[$r \equiv_8 4$:] $\cA$ is even and $\cA^\pm \circ \cl^\mp = \sigma \cl^\mp \circ \cA^\mp$, where $\sigma = \pm 1$ is an extra degree of freedom in the choice of $\cA$, and $\cA^2 = - \id_\S$.

	\item[$r \equiv_8 5$:] $\cA$ anti-commutes with $\cl$ and $\cA^2 = \id_\S$.

	\item[$r \equiv_8 6$:] $\cA$ is odd, $\cA^\pm \circ \cl^\mp = \sigma \cl^\pm \circ \cA^\mp$, and $\cA^\mp \circ \cA^\pm = \sigma \id_{\S^\pm}$, where $\sigma = \pm 1$ is an extra degree of freedom in the choice of $\cA$.

	\item[$r \equiv_8 7$:] $\cA$ commutes with $\cl$ and $\cA^2 = \id_\S$.

\end{itemize}

Recall that we defined $\sigma_r$ and $\s_r$ in \cref{eq:sigma_s_def}. We now then have

\begin{lemma}
	\label{lemma:pairings}
	The above bilinear form can be extended to forms as follows:
	\begin{itemize}

		\item If $t + s$ is odd, then the map
			\begin{align}
				\widehat{\cB}: \S \otimes \S	&\rightarrow	{\bigwedge}_\cx^{\mathrm{even}} \( \rl^{t, s} \); \\
				\Psi_1 \otimes \Psi_2			&\mapsto		\sum\limits_{\mathclap{\substack{0 \leqslant |I| \leqslant d \\ {|I| \textnormal{ is even}}}}} \cB \( \cl \( \rd x^I \) \Psi_1, \Psi_2 \) \rd x^I,
			\end{align}
			is an isomorphism. Analogous isomorphism exists for odd forms.

		\item If $t + s$ is even, then the map
			\begin{align}
				\widehat{\cB} : \S \otimes \S	&\rightarrow	{\bigwedge}_\cx^* \( \rl^{t, s} \); \\
				\Psi_1 \otimes \Psi_2 			&\mapsto		\sum\limits_{\mathclap{0 \leqslant |I| \leqslant d}} \cB \( \cl \( \rd x^I \) \Psi_1, \Psi_2 \) \rd x^I,
			\end{align}
			is an isomorphism.
	\end{itemize}
	Furthermore, if $\pi_0 : \bigwedge_\cx^{\mathrm{even}} \( \rl^{t, s} \) \rightarrow \cx$ is the orthogonal projection onto $\bigwedge_\cx^0 \( \rl^{t, s} \) \cong \cx$, then $\cB = \pi_0 \circ \widehat{\cB}$.

\end{lemma}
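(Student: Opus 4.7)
The plan is to realize $\widehat{\cB}$ as the composition of a linear-algebra isomorphism $\S \otimes \S \cong \End(\S)^*$ with the dual of the Clifford action $\cl$, reducing the claim to a dimension count plus an injectivity argument, and then invoking the classification of complex Clifford algebras. First I would introduce the auxiliary map
\begin{equation*}
\cB' : \S \otimes \S \to \End(\S)^*, \qquad \cB'(\Psi_1 \otimes \Psi_2)(A) \eqdef \cB(A \Psi_1, \Psi_2),
\end{equation*}
and argue it is an isomorphism: since $\dim_\cx(\S \otimes \S) = (\dim_\cx \S)^2 = \dim_\cx \End(\S)^*$, only injectivity is needed. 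Given $\xi = \sum_i \Psi_1^{(i)} \otimes \Psi_2^{(i)} \in \ker \cB'$, I would assume without loss that the $\Psi_1^{(i)}$ are linearly independent and pick $A \in \End(\S)$ sending $\Psi_1^{(j)}$ to an arbitrary $v \in \S$ while annihilating the remaining $\Psi_1^{(i)}$; the vanishing $\cB'(\xi)(A) = \cB(v, \Psi_2^{(j)}) = 0$ for every $v$ together with the nondegeneracy of $\cB$ then forces each $\Psi_2^{(j)} = 0$.

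Next I would invoke the structure of the complexified Clifford algebra. When $d = t + s$ is even, $\cl : \CL(\rl^{t, s}) \otimes \cx \to \End(\S)$ is an isomorphism. When $d$ is odd, $\cl$ restricted to $\CL^{\mathrm{even}}(\rl^{t, s}) \otimes \cx$ is an isomorphism onto $\End(\S)$; moreover, left-multiplication by the (central, odd, invertible) complex volume element gives a linear isomorphism $\CL^{\mathrm{odd}} \cong \CL^{\mathrm{even}}$, so $\cl$ restricted to $\CL^{\mathrm{odd}} \otimes \cx$ is likewise an isomorphism onto $\End(\S)$. The symbol (quantization) map identifies $\CL$ with $\bigwedge^*(\rl^{t,s})$ as graded vector spaces, and the canonical basis $\{\rd x^I\}$ gives the self-duality $(\bigwedge^*)^* \cong \bigwedge^*$.

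Dualizing the Clifford action and composing with $\cB'$ and the identifications above, one reads off
\begin{equation*}
\widehat{\cB}(\Psi_1 \otimes \Psi_2) = \sum_I \cB'(\Psi_1 \otimes \Psi_2) \bigl( \cl(\rd x^I) \bigr) \, \rd x^I = \sum_I \cB \bigl( \cl(\rd x^I) \Psi_1, \Psi_2 \bigr) \, \rd x^I,
\end{equation*}
which matches the statement and exhibits $\widehat{\cB}$ as a composition of isomorphisms (over even/odd indices separately in odd dimension). The last assertion $\cB = \pi_0 \circ \widehat{\cB}$ is immediate from the $|I| = 0$ term: $\cB(\cl(1) \Psi_1, \Psi_2) = \cB(\Psi_1, \Psi_2)$. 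The only genuine obstacle is a bookkeeping one, namely keeping track in the last step of how the three identifications (via the nondegeneracy of $\cB$, via the symbol map, and via the standard basis self-duality of $\bigwedge^*$) compose so as to reproduce the explicit summation formula; all underlying facts are either routine linear algebra or classical consequences of the Clifford algebra classification.
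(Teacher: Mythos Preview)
Your argument is correct. The paper's proof is a one-line sketch: it simply asserts that the maps are linear injections between finite-dimensional spaces of equal dimension and invokes a dimension count, without saying why injectivity holds. Your route is genuinely more informative: by factoring $\widehat{\cB}$ as the composition
\[
\S \otimes \S \xrightarrow{\ \cB'\ } \End(\S)^* \xrightarrow{\ \cl^*\ } \left({\bigwedge}_\cx^{\bullet}\right)^* \cong {\bigwedge}_\cx^{\bullet},
\]
you reduce injectivity to two standard facts---nondegeneracy of $\cB$ (giving $\cB'$ an isomorphism) and the classification of complex Clifford algebras (giving $\cl$ an isomorphism onto $\End(\S)$, on the even part in odd dimension). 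This buys you an actual explanation of the injectivity the paper only asserts, and it makes transparent that the ``analogous isomorphism for odd forms'' in the odd-dimensional case comes from multiplication by the volume element. The cost is the bookkeeping you flag at the end, namely checking that the three identifications compose to the stated summation formula; this is routine, and your verification via the $|I|=0$ term for $\cB = \pi_0 \circ \widehat{\cB}$ is exactly right.
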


\begin{proof}
	The above maps are linear injections between finite dimensional vector spaces, so one can prove that they are isomorphisms by simple dimension counts.
\end{proof}

\begin{remark}
	Let $\mathrm{vol}$ be the volume form of $\rl^{t, s}$, that is a unit norm element of $\bigwedge^{t + s} \( \rl^{t, s} \)$. Then $\cA^\prime = \cl \( \mathrm{vol} \) \circ \cA$ is also a $\CL_{t, s}^0$-invariant conjugate linear map. In odd dimensions $\cl \( \mathrm{vol} \)$ is proportional to the identity, but in even dimensions $\cl \( \mathrm{vol} \)$ is proportional to the parity operator, and if $\cA \circ \cl (\cdot) = \sigma \cl (\cdot) \circ \cA$, then $\cA^\prime \circ \cl (\cdot) = - \sigma \cl (\cdot) \circ \cA^\prime$.
\end{remark}

\medskip

\subsection{More general Clifford modules}
\label{sec:CL_modules}

Let now $\E$ be any Hermitian $\CL_{t, s}$ module. There always is a Hermitian vector space $E$, such that $\E \cong \S \otimes E$; cf. \cite{N07}*{Corollary~11.1.22.}, in fact $E \cong \Hom_{\CL_d} \( \S, \E \)$. In particular, if $d$ is even, then $\E$ is also $\Z_2$-graded, which we still denote as $\E \cong \E^+ \oplus \E^-$ with $\E^\pm \cong \S^\pm \otimes E$.

Let $F \eqdef E \otimes E$. Then we have the following generalization of the pairing from \Cref{sec:pairings}:
\begin{equation}
	\cB_\E : \E \otimes \E \rightarrow F; \: \( \psi_1 \otimes e_1 \) \otimes \( \psi_2 \otimes e_2 \) \mapsto \cB \( \psi_1, \psi_2 \) e_1 \otimes e_2,
\end{equation}
and more generally we define maps $\widehat{\cB}_\E$, with values in $\bigwedge_\cx^* \( \rl^{t, s} \) \otimes F$, similarly to \Cref{lemma:pairings}. Moreover, we have the following simple lemma.

\begin{lemma}
	\label{lemma:uniqueness}
	For any complex bilinear form, $B$, on $\E$, there exists a $\Phi$ in $\( \bigwedge_\cx^* \( \rl^{t, s} \) \otimes F \)^*$, such that
	\begin{equation}
		B = \Phi \circ \widehat{\cB}_\E. \label{eq:B_decomp}
	\end{equation}
	If $r = t - s$ is even, then $\Phi$ is unique. If $r = t - s$ is odd, then $\Phi$ is unique among even forms (or odd forms for the analogous isomorphism).

	Furthermore, if $\Phi$ is in $\( \bigwedge_\cx^k \( \rl^{t, s} \) \otimes F \)^*$, (where $k$ is even, if $t + s$ is odd), then
	\begin{equation}
		B \( \Psi_1, \Psi_2 \) = (- 1)^{\frac{k (k + 1)}{2}} \s_r \sigma_r^k B \( \Psi_2, \Psi_1 \). \label{eq:B_symmetry}
	\end{equation}
\end{lemma}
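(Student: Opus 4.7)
The strategy splits into two parts: establishing existence and the (appropriate) uniqueness of $\Phi$, and then verifying the symmetry identity \eqref{eq:B_symmetry}.

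For the first part, the plan is to invoke \Cref{lemma:pairings}, which asserts that $\widehat{\cB}\colon \S \otimes \S \to \bigwedge^*_\cx(\rl^{t,s})$ is an isomorphism when $t+s$ is even, and an isomorphism onto the even-degree subspace when $t+s$ is odd. Tensoring with the identity on $F = E \otimes E$ yields the same statement for $\widehat{\cB}_\E$. Regarding a bilinear form $B$ on $\E$ as a linear functional on $\E \otimes \E$, I would set $\Phi \eqdef B \circ \widehat{\cB}_\E^{-1}$ on the image of $\widehat{\cB}_\E$. This produces a unique $\Phi$ when $r$ is even; in the odd-dimensional case it uniquely determines $\Phi$ on the even-degree part (the odd-degree component remaining free), matching the uniqueness clause of the lemma.

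For the symmetry formula, the plan is to reduce to a spinor-level identity. On pure tensors $\Psi_i = \psi_i \otimes e_i$ one has $\widehat{\cB}_\E(\Psi_1, \Psi_2) = \widehat{\cB}(\psi_1, \psi_2) \otimes e_1 \otimes e_2$, so it suffices to prove that on the degree-$k$ component
\[
	\widehat{\cB}^{(k)}(\psi_2, \psi_1) = (-1)^{k(k+1)/2}\, \s_r \, \sigma_r^k \, \widehat{\cB}^{(k)}(\psi_1, \psi_2).
\]
To establish this I would expand $\widehat{\cB}^{(k)}(\psi_2, \psi_1) = \sum_{|I|=k} h\bigl(\cA \cl(\rd x^I) \psi_2, \psi_1\bigr) \rd x^I$ in a local orthonormal frame and process each coefficient in three moves: first push $\cl(\rd x^I)$ past $\cA$ using $\cA \circ \cl(v) = \sigma_r \cl(v) \circ \cA$, contributing $\sigma_r^k$; next move $\cl(\rd x^I)$ from the left to the right slot of $h$ via anti-self-adjointness $\cl(v)^* = -\cl(v)$ combined with the reversal of a product of $k$ anticommuting factors, contributing $(-1)^k \cdot (-1)^{k(k-1)/2} = (-1)^{k(k+1)/2}$; finally use $\cA^2 = \s_r \id$ and the anti-unitarity $h(\cA a, \cA b) = \overline{h(a,b)}$ to rewrite the output as $h\bigl(\cA \cl(\rd x^I) \psi_1, \psi_2\bigr) = \cB(\cl(\rd x^I) \psi_1, \psi_2)$, contributing the final factor of $\s_r$.

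The principal obstacle is the sign bookkeeping: three independent signs, namely $(-1)^{k(k+1)/2}$ from Clifford reversal, $\sigma_r^k$ from $\cA$-commutation, and $\s_r$ from $\cA^2$, all enter and must be tracked consistently through the complex conjugations introduced whenever scalars are moved past the antilinear $\cA$. Once the spinor identity is in hand, evaluating $\Phi$ on both $\widehat{\cB}^{(k)}(\psi_1, \psi_2) \otimes e_1 \otimes e_2$ and its swap yields the claimed symmetry for $B$, the $F$-swap $e_1 \otimes e_2 \leftrightarrow e_2 \otimes e_1$ being absorbed into the single element $\Phi \in (\bigwedge_\cx^k \otimes F)^*$. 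The formula then extends from pure tensors to all of $\E \otimes \E$ by bilinearity.
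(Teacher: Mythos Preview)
Your approach matches the paper's own proof: the existence and uniqueness of $\Phi$ comes from the isomorphism of \Cref{lemma:pairings} (the paper phrases this as a dimension count on the correspondence $\Phi \mapsto \Phi \circ \widehat{\cB}_\E$, which is equivalent), and the symmetry identity is derived from \cref{eq:sigma_s_def} together with the skew-adjointness of Clifford multiplication by $1$-forms, exactly the two ingredients the paper names. Your sign bookkeeping for the spinor factor is correct.

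One loose end: your claim that the $F$-swap $e_1 \otimes e_2 \leftrightarrow e_2 \otimes e_1$ is ``absorbed into the single element $\Phi$'' is not a valid justification. If $\Phi = \phi \otimes f$ with $f \in F^*$ antisymmetric, then swapping the $E$-tensors contributes an extra $-1$ and the stated formula fails. The lemma as written is tacitly assuming $\Phi$ lies in the symmetric part $(\bigwedge^k \otimes F^+)^*$ (consistent with the splitting $F = F^+ \oplus F^-$ discussed immediately afterward in the paper); the paper's own proof glosses over this as well, so this is an imprecision in the statement rather than a defect specific to your argument.
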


\begin{proof}
	The proof involves straightforward computations and we only present the key steps here.

	In all cases, the correspondence
	\begin{equation}
		\Phi \mapsto \Phi \circ \widehat{\cB}_\E, \label{eq:correspondence}
	\end{equation}
	is a linear injection between finite dimensional vector spaces, so one can prove that they are isomorphisms by simple dimension counts. This proves \cref{eq:B_decomp} and the uniqueness of $\Phi$.

	Finally, proving \cref{eq:B_symmetry} is then straightforward, using \cref{eq:sigma_s_def} and that the values of $\cl$ on 1-forms are skew-adjoint with respect to the Hermitian structure of $\E$.
\end{proof}

\smallskip

Now, given an element $\Phi \in \( \bigwedge_\cx^* \( \rl^{t, s} \) \otimes F \)^*$, we can now define a conjugate linear map $\cA_\Phi : \E \rightarrow \E$ via
\begin{equation}
	h^\E \( \cA_\Phi \Psi_1, \Psi_2 \) = \Phi \( \widehat{\cB}_\E \( \Psi_1, \Psi_2 \) \).
\end{equation}
Now let us consider the splitting $\E \otimes \E \cong \vee^2 \E \oplus \wedge^2 \E$. These subspaces are the $\pm 1$ eigenspaces of the involution $\Psi_1 \otimes \Psi_2 \mapsto \Psi_2 \otimes \Psi_1$. Similarly, we get a splitting $F = F^+ \oplus F^-$. Assume now that $\Phi^\pm \in F^\pm$. Then simple computation shows that
\begin{equation}
	h^\E \( \cA_{\Phi^\pm} \Psi_1, \Psi_2 \) = \pm \s_r h^\E \( \cA_{\Phi^\pm} \Psi_2, \Psi_1 \).
\end{equation}
A metric compatible real or a quaternionic structure on $E$ can be regarded as an element of $\cF^+$ or $\cF^-$, respectively.

\bigskip

\section{The Bogoliubov--de Gennes equation}
\label{app:BdG}

In this section we show how the above theory also leads to a geometric framework (and generalization) to the Bogoliubov--de Gennes equations; cf \cite{CJNPS10}*{Equation~(55)}.

For simplicity, let $X$ be a closed, oriented, Riemannian manifold. The generalized Jackiw--Rossi operator in \cref{eq:JR_op} has two uncommon properties: it is not complex linear and it may not be (real) self-adjoint. There is however a canonical way to construct a (in fact, two) complex linear and (complex) self-adjoint operator(s) out of this data.

Let $\p \in \{ - 1, 1 \}$. Using the notation of \Cref{definition:JR_op}, let $\widehat{\E} \eqdef \E \oplus \overline{\E}$. Since $\cA_\Phi$ is a complex \emph{linear} map from $\E$ to $\overline{\E}$, we have that the operator
\begin{equation}
	\widehat{H}_{\nabla, \Phi, \p} \eqdef \begin{pmatrix} \D_\nabla & \cA_\Phi \\ \cA_\Phi^* & \p \D_\nabla \end{pmatrix} = \begin{pmatrix} \D_\nabla & \cA_\Phi \\ \s_\Phi \cA_\Phi & \p \D_\nabla \end{pmatrix},
\end{equation}
is a complex linear map from $L_1^2 \( \widehat{\E} \)$ to $L^2 \( \widehat{\E} \)$ and self-adjoint as a densely defined operator on $L^2 \( \widehat{\E} \)$. Motivated by \cite{CJNPS10}*{Equations~(9)~and~(11)}, we call $\widehat{H}_{\nabla, \Phi}$ the \emph{Bogoliubov--de Gennes operator}.

For each $m \in \rl$, the equation
\begin{equation}
	\widehat{H}_{\nabla, \Phi, \p} \begin{pmatrix} \Psi_1 \\ \overline{\Psi}_2 \end{pmatrix} = m \begin{pmatrix} \Psi_1 \\ \overline{\Psi}_2 \end{pmatrix},
\end{equation}
is the Euler--Lagrange equation of the energy density
\begin{equation}
	E_{\nabla, \Phi} \( \Psi_1, \Psi_2 \) = h \( \Psi_1, \D_\nabla \Psi_1 \) + \p h \( \Psi_2, \D_\nabla \Psi_2 \) + 2 \Re \( \Phi \( \cB \( \Psi_1 \otimes \Psi_2 \) \) \) - m \( |\Psi_1|^2 + |\Psi_2|^2 \).
\end{equation}

Let us define a unitary operator on $\widehat{\E}$ as
\begin{equation}
	\widehat{C} \begin{pmatrix} \Psi_1 \\ \overline{\Psi}_2 \end{pmatrix} \eqdef \begin{pmatrix} \Psi_2 \\ \s_\Phi \p \overline{\Psi}_1 \end{pmatrix}.
\end{equation}
Then we have
\begin{align}
	\widehat{H}_{\nabla, \Phi, \p} \circ \widehat{C}	&= \p \widehat{C} \circ \widehat{H}_{\nabla, \Phi, \p}, \label{eq:HC_commutation} \\
	\widehat{C}^2										&= \s_\Phi \p \id_{\widehat{\E}}. \label{eq:C_square}
\end{align}
By \cref{eq:HC_commutation}, $\widehat{C}$ preserves the kernel of $\widehat{H}_{\nabla, \Phi, \p}$ and by \cref{eq:C_square}, $\widehat{C}$ is either a real $(\s_\Phi \p = 1)$ or a quaternionic $(\s_\Phi \p = - 1)$ structure on $\ker \( \widehat{H}_{\nabla, \Phi, \p} \)$.

If $\p = 1$, then \cref{eq:HC_commutation} can be interpreted as $\widehat{H}_{\nabla, \Phi, \p}$ having a \emph{time-reversal} symmetry, which is represented by $\widehat{C}$, and, by \cref{eq:C_square}, this symmetry is bosonic if $\s_\Phi = 1$ and fermionic if $\s_\Phi = - 1$.

If $\p = - 1$, then \cref{eq:HC_commutation} can be interpreted as $\widehat{H}_{\nabla, \Phi, \p}$ having a \emph{particle-hole} symmetry, which is represented by $\widehat{C}$, and, by \cref{eq:C_square}, this symmetry is bosonic if $\s_\Phi = - 1$ and fermionic if $\s_\Phi = 1$.

	\bibliography{references}

\end{document}